\documentclass[10pt,twoside]{article}

%-----------------------------------
\usepackage{amsmath}
\usepackage{amsfonts}
\usepackage{amssymb}
\usepackage{amscd}
\usepackage{amsthm}
\usepackage{amsbsy}
\usepackage{graphicx}
\usepackage{bm}

%Nuevo
\usepackage{esint}
\usepackage{enumerate}
\usepackage{tikz}
\usetikzlibrary{patterns,shapes,calc,through,arrows,fadings,decorations.pathreplacing,intersections}

%------------------------------------
\textwidth 125truemm
\textheight 195truemm
\paperwidth 169truemm
\paperheight 239truemm
\oddsidemargin 3mm
\evensidemargin 3mm
\headsep 6mm
\footskip 11mm
\baselineskip 4.5mm
%-------------------------------------
\pagestyle{myheadings}

\def\@oddhead{\hfill \shorttitle \hfill \thepage}
\def\@evenhead{\thepage \hfill \shortauthor \hfill}
\def\@oddfoot{}
\def\@evenfoot{}

%Nuevo
\def\eps{\varepsilon}
\def\real{\mathbb{R}}

\def\essinf{\mathop{\mbox{\normalfont ess inf}}\limits}
\def\esssup{\mathop{\mbox{\normalfont ess sup}}\limits}
\def\rad{\mbox{\scriptsize{\normalfont{rad}}}}
\def\loc{\mbox{\scriptsize{\normalfont{loc}}}}

\newtheorem{teo}{Theorem}[section]

\newtheorem{lemma}[teo]{Lemma}
\newtheorem{prop}[teo]{Proposition}

\theoremstyle{definition}
\newtheorem*{xrem}{Remark}

\numberwithin{equation}{section}

%--------------------------------------

\date{}
\markboth{\hfill{\rm  A. Criado and F. Soria} \hfill}{\hfill {\rm On the dimension dependence of some weighted inequalities} \hfill}
\title{\ \\[0.4cm] \ \\ \bf  On the dimension dependence of some weighted inequalities }
\author{Alberto Criado\footnote{Departamento de Matem\'aticas, Universidad Aut\'onoma de Madrid, 28049 Madrid, Spain.}\hspace{2mm}
and Fernando Soria\footnote{Departamento de Matem\'aticas and Instituto de Ciencias Matem\'aticas CSIC-UAM-UC3M-UCM, Universidad Aut\'onoma de Madrid, 28049 Madrid, Spain.}}
%-------------------------------------
\begin{document}
%-------------------

\maketitle

%------------------------

\thispagestyle{empty}

%--------------------------------------

\hfill{\small{Dedicated with admiration to Professor Shanzhen Lu.}

%--------------------------------------
\begin{abstract}
\vskip 3mm\footnotesize{

\vskip 4.5mm
\noindent
In the context of radial weights we study the dimension dependence of some weighted inequalities for maximal operators. We study the growth of the $A_1$-constants for radial weights and show the equivalence between the uniform boundedness of these constants, a dimension-free weak $L^1$ estimate for the maximal operator on annuli and the condition on the weight to be decreasing and essentially constant over dyadic annuli. Each one of these conditions is shown to provide dimension-free weighted weak type $L^1$ estimates for the centred maximal Hardy-Littlewood operator acting on radial functions. Finally we show that the universal maximal operator is of restricted weak type on weighted $L^n(\real^n)$ with constants uniformly bounded in dimension whenever we consider an $A_1$ weight.

\vspace*{2mm}
\noindent{\bf 2000 Mathematics Subject Classification: 42B25}

\vspace*{2mm}
\noindent{\bf Keywords and Phrases: radial weights, dimension-free estimates}}

\end{abstract}

% ----------------------------------------------------------------

\section{Introduction.}

\bigskip

In this paper we will study the dimension dependence of the bounds for some maximal operators when acting over weighted spaces. First, we consider the centered Hardy-Littlewood maximal operator over Euclidean balls. For a locally integrable $g$ on $\real^n$ it is defined as
\[
Mg(x)=\sup_{R>0} \fint_{B_R(x)} |g(y)|\,dy,
\]
where $B_R(x)$ is the Euclidean ball of radius $R$ centered at $x$. It is well-known since the time of Hardy and Littlewood that for each $n$, this operator is bounded on $L^p(\real^n)$ for $p>1$ and weakly bounded on $L^1(\real^n)$. Much later, E.M. Stein raised the questions whether the operator norm in these inequalities could be bounded independently of the dimension and whether this uniformity in dimension could be related to an infinite-dimensional phenomenon. As far as we are concerned, very little is known about the second question. As for the first one, Stein himself showed in \cite{Stein1} (details in \cite{SteinStromberg}) that for all $p>1$ one has $\|M\|_{L^p(\real^n)\rightarrow L^p(\real^n)}\leq C_p$, with $C_p$ independent of $n$. In joint work with J.O. Str\"omberg \cite{SteinStromberg} he also proved that $\|M\|_{L^1(\real^n)\rightarrow L^{1,\infty}(\real^n)}=O(n)$ as $n\rightarrow \infty$. Although this does not solve the still open problem of deciding whether these weak $L^1$ operator norms grow to infinity with the dimension or not, it is still the best known result.

\bigskip

This problem of uniform bounds in dimension has also been studied for maximal functions where the averages are taken over balls given by arbitrary norms in $\real^n$. In all cases one has $\|M\|_{L^p(\real^n)\rightarrow L^p(\real^n)}\leq C$ with $C$ depending only on $p$ for $p>3/2$ (see \cite{Bourgain1}, \cite{Bourgain2}, \cite{Bourgain3} and \cite{Carbery}). For the balls given by the $\ell^q$ metrics in $\real^n$, with $1\leq q\leq \infty$, for all $p>1$ one has $\|M\|_{L^p(\real^n)\rightarrow L^p(\real^n)}\leq C_{p,q}$ with $C_{p,q}$ independent of $n$ (see \cite{Muller} for the case $1\leq q<\infty$ and \cite{Bourgain5} for $q=\infty$). As for the weak $L^1$ inequalities in this case, Stein and Str\"omberg proved that, in general, $\|M\|_{L^1(\real^n)\rightarrow L^{1,\infty}(\real^n)}=O(n\log n)$ as $n\rightarrow \infty$. It is still unknown if these operator norms remain bounded in all dimensions, except when those averages are taken over the balls of the $\ell^\infty$ metric, that is, the case of cubes with sides parallel to the coordinate axes. In this case Aldaz showed in \cite{Aldaz2} that these weak $L^1$ operator norms grow to infinity with the dimension (see also \cite{Aubrun}).

\bigskip

A variation of the problem arises when the maximal operator is defined using measures different from the Lebesgue one. For a Radon measure $\mu$ on $\real^n$ we define the associated maximal operator
\[
M_\mu g(x)=\sup_{\begin{array}{c}\\[-6mm]\scriptstyle{R>0}\\[-1.5mm] \scriptstyle{\mu(B_R(x))>0}\end{array}} \frac1{\mu(B_R(x))} \int_{B_R(x)} |g(y)|\,d\mu(y).
\]
If $\mu$ has a radial density $w$, then $\mu$ and $M_\mu$ can be defined in all dimensions. We may ask then if the operator norm of $M_\mu$ in $L^p(\mu)$ is uniformly bounded in dimension. If $\mu$ is finite this is not true in general (see \cite{Aldaz}, \cite{Criado}, \cite{AldazPerezLazaro}). For instance, when $\mu$ is the Gaussian measure, the $L^p(\mu)$ operator norms of $M_\mu$ grow exponentially to infinity with the dimension for all $p<\infty$ (see \cite{CriadoSjogren}). The situation is very different when $\mu$ satisfies a doubling condition. One says that $\mu$ is uniformly strong $n$-microdoubling if there exist $K>0$ and $N>0$ so that for all $n\geq N$, $x\in\real^n$, $R>0$ and $y\in B_R(x)$ one has 
\[
\mu(B_{(1+1/n)R}(x))\leq K \mu(B_R(x))\ \mbox{ and }\ \mu(B_R(y))\leq K \mu(B_R(x)).
\] 
Roughly speaking this property guarantees that small dilations and translation do not alter essentially the measure of a ball. If $\mu$ is such a measure, then one recovers the Stein and Str\"omberg bound $\|M_\mu\|_{L^1(\real^n,d\mu)\rightarrow L^{1,\infty}(\real^n,d\mu)}=O(n\log n)$ (see \cite{NaorTao}). Moreover one has the uniform bound $\|M_\mu\|_{L^p(\real^n,d\mu)\rightarrow L^p(\real^n,d\mu)}\leq C_{p, \mu}$ for all $n\geq N$ and $p>1$ (see \cite{CriadoSoria}).

\bigskip

Still another problem is the one that considers weighted inequalities for the maximal operator. A weight $w$ is an a.e. nonnegative and locally integrable function over $\real^n$. A weight is often regarded as the density of a measure over $\real^n$ that is absolutely continuous with respect to the Lebesque measure. Following the usual notation we will also denote this measure by $w$, i.e. for a measurable $E$ we will write $w(E)=\int_Ew$ and we will say that a function $f\in L^p(\real^n,w)$ if $\int_{\real^n}|f|^pw<\infty$. For $p\geq 1$ we say that a weight $w$ is in the class $A_p(\real^n)$ if $M$ sends $L^p(\real^n,w)$ into $L^{p,\infty}(\real^n,w)$ boundedly\footnote{In what follows, the expression $T:X\rightarrow Y$ will denote that the (sublinear) operator T is bounded between the spaces $X$ and $Y$.}. It is well-known that for $p>1$ this is equivalent to $M:L^p(\real^n,w)\rightarrow L^p(\real^n,w)$ boundedly. These bounds have been studied extensively. For more information see \cite{CuervaRubiodeFrancia} or \cite{Grafakos}.

\bigskip

In this work we will consider radial weights, so that we can define them in all dimensions. For such a weight $w$ we will write $w(x)=w_0(|x|)$ with $w_0:[0,\infty)\rightarrow [0,\infty]$. J. Duoandikoetxea and L. Vega announced in \cite{DuoandiVega} the following result.

\bigskip

\begin{teo}\label{teo.duoandi.vega} Let $w_0$ be a nonnegative function on $[0,\infty)$, so that $w=w_0(|\cdot|)\in A_p(\real^N)$ with $p<1$. Then for all $n\geq N$ one has $w\in A_p(\real^n)$ and, moreover, 
\[
\|Mf\|_{L^p(\real^n,w)} \leq C \|f\|_{L^p(\real^n,w)},
\]
with a constant $C$ that might depend on $p$ and $w_0$ but not on $n$.
\end{teo}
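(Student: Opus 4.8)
\medskip
\noindent\textbf{Proof proposal (with $p>1$).} The plan is to push the $n$-dimensional $A_p$ condition for the radial weight back to the fixed dimension $N$ by a one-dimensional reduction. Since $w=w_0(|\cdot|)$ is radial, for every ball $B_\rho(x)$ the average $\fint_{B_\rho(x)}h(|y|)\,dy$ depends only on $d=|x|$ and $\rho$ and equals $\int_0^\infty h\,d\nu_n^{d,\rho}$, where $\nu_n^{d,\rho}$ is the law of $|y|$ for $y$ uniform on $B_\rho(x)$; its density is proportional to $\mathcal H^{n-1}(S_r\cap B_\rho(x))$. Hence
\[
[w]_{A_p(\real^n)}=\sup_{d\ge 0,\ \rho>0}\Big(\int w_0\,d\nu_n^{d,\rho}\Big)\Big(\int w_0^{1-p'}\,d\nu_n^{d,\rho}\Big)^{p-1},
\]
and everything becomes a question about this family of averaging measures. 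The feature we exploit is that $\nu_n^{d,\rho}$ is a \emph{bump}: it concentrates at a point $r_\ast\asymp\sqrt{d^2+\rho^2}$ on a scale $\ell$ (with $\ell\asymp\rho/n$, density $nr^{n-1}\rho^{-n}$ and tail $\asymp e^{-t}$ at distance $t\ell$ when $d\lesssim\rho/\sqrt n$; with $\ell\lesssim\rho/\sqrt n$ and (sub-)Gaussian tails otherwise), and in every regime the tails away from $r_\ast$, measured in units of $\ell$, decay faster than any fixed geometric rate.

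\smallskip
\noindent\textbf{Comparison with dimension $N$.} Fix $n\ge N$ and such a bump $\nu=\nu_n^{d,\rho}$ with centre $r_\ast$ and scale $\ell$. Choosing a scale $\delta\lesssim\ell$ and a probability measure $\lambda$ on radii (one may take $\lambda=\nu$), a convolution/smoothing estimate gives
\[
\nu\ \le\ C_1\int\nu_N^{\sigma,\delta}\,d\lambda(\sigma),\qquad C_1=C_1(N),
\]
since averaging the narrower $N$-dimensional bumps $\nu_N^{\sigma,\delta}$ against $\lambda$ reproduces the wider bump $\nu$ up to a bounded factor. The gain is that each $\nu_N^{\sigma,\delta}$ occurring here \emph{is} the radial-average measure of a genuine ball $B_\delta(x_\sigma)\subset\real^N$ with $|x_\sigma|=\sigma$, so the hypothesis $w\in A_p(\real^N)$ applies to it:
\[
\Big(\int w_0\,d\nu_N^{\sigma,\delta}\Big)\Big(\int w_0^{1-p'}\,d\nu_N^{\sigma,\delta}\Big)^{p-1}\ \le\ [w]_{A_p(\real^N)}=:Q\qquad(\forall\,\sigma).
\]

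\smallskip
\noindent\textbf{Passing from the pieces to the average.} Write $A(\sigma)=\int w_0\,d\nu_N^{\sigma,\delta}$ and $B(\sigma)=\int w_0^{1-p'}\,d\nu_N^{\sigma,\delta}$, so that $\int w_0\,d\nu\le C_1\int A\,d\lambda$ and likewise for $B$. Now $\lambda$ sits near $r_\ast$ at scale $\ell$ with rapidly decaying tails, while $A,B$ are averages of $w$ over $\delta$-balls in $\real^N$ and hence, by the doubling property of the $A_p(\real^N)$-weight $w$ (doubling constant depending only on $N,p,Q$), grow at most geometrically: $A(\sigma)\le C_3^{\,k}A(r_\ast)$ and $B(\sigma)\le C_3^{\,k}B(r_\ast)$ on the dyadic shell $\{|\sigma-r_\ast|\asymp 2^k\ell\}$ (choosing $x_\sigma$ radially aligned with $x_{r_\ast}$). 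Since the $\lambda$-mass of that shell decays faster than $C_3^{-k}$ — the exponential/Gaussian decay beats the geometric growth \emph{uniformly in $n$}, even though the number of relevant shells is of order $\log n$ — summation over $k$ gives $\int A\,d\lambda\le C_2A(r_\ast)$ and $\int B\,d\lambda\le C_2B(r_\ast)$ with $C_2=C_2(N,p,Q)$, whence
\[
\Big(\int w_0\,d\nu_n^{d,\rho}\Big)\Big(\int w_0^{1-p'}\,d\nu_n^{d,\rho}\Big)^{p-1}\ \le\ (C_1C_2)^p\,A(r_\ast)B(r_\ast)^{p-1}\ \le\ (C_1C_2)^p\,Q,
\]
uniformly in $d,\rho$ and in $n\ge N$. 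Thus $[w]_{A_p(\real^n)}\le C(p,w_0,N)$, and the maximal estimate follows with a constant of the same type: the uniform bound on $[w]_{A_p(\real^n)}$ (hence on the $A_\infty$-constant) together with the dimension-free sharp reverse H\"older inequality gives $w\in A_{p-\eta}(\real^n)$ with uniformly bounded constant for some $\eta=\eta(p,w_0,N)>0$, and the resulting dimension-free weak $(p-\eta,p-\eta)$ bound for $M$, interpolated with the trivial $L^\infty$ bound (Marcinkiewicz, constants independent of $n$), yields $\|Mf\|_{L^p(\real^n,w)}\le C\|f\|_{L^p(\real^n,w)}$.

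\smallskip
\noindent\textbf{The main difficulty.} The crux is exactly the bump-domination inequality together with the tail estimate: one must check carefully that an $n$-dimensional radial-average measure is controlled by a $\lambda$-mixture of honest $N$-dimensional ball averages at a finer scale, and that the growing-in-$n$ family of ``far'' shells is absorbed into a single constant because the spatial decay of $\lambda$ outruns the geometric growth forced by doubling. Pinning down the shape of $\nu_n^{d,\rho}$ in the various regimes (in particular near the origin and when $d\sim\rho$) also requires work, as does the final, more routine, step of making the passage from the $A_p$ bound to the strong maximal inequality dimension-free.
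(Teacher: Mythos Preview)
Your program has a genuine gap at the very end, and it is precisely the step you call ``more routine''. You argue (plausibly, if sketchily) that $[w]_{A_p(\real^n)}$ stays bounded as $n\to\infty$, then assert that a uniform $A_p$ constant, via a dimension-free reverse H\"older and self-improvement to $A_{p-\eta}$, yields a \emph{dimension-free weak $(p-\eta,p-\eta)$} bound for $M$. That last implication is not known. Buckley's sharp bound $\|M\|_{L^q(w)\to L^q(w)}\le C_n[w]_{A_q}^{1/(q-1)}$ carries a dimensional constant $C_n$ coming from the covering step (Vitali/Besicovitch), and the same is true of the weak-type inequality: from $w(B)\le[w]_{A_q}\lambda^{-q}\int_B|f|^q w$ for the balls producing $Mf>\lambda$ one still has to pass to a disjoint or bounded-overlap family, and every known way to do this costs a constant that grows with $n$. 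In other words, a uniform bound on $[w]_{A_p(\real^n)}$ does \emph{not} by itself give a uniform bound on $\|M\|_{L^p(\real^n,w)\to L^p(\real^n,w)}$; producing the latter is exactly the content of the theorem.

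The paper avoids this by never trying to control the covering constants. Instead it uses Stein's method of rotations in the form
\[
M_nf(x)\ \le\ \int_{SO(n)}\mathcal M_m\bigl[(f\circ\tau)_{\tau^{-1}(x)_2}\bigr](\tau^{-1}(x)_1)\,d\tau,
\]
with $m$ fixed, where $\mathcal M_m$ is the $m$-dimensional \emph{spherical} maximal operator. Two lemmas then finish the job: first, that the sliced weights $w_\rho(y)=w_0(\sqrt{\rho^2+|y|^2})$ lie in $A_p(\real^k)$ with constants depending only on $k$ and $[w]_{A_p(\real^k)}$; second, that for some fixed $m\ge k$ one has $\mathcal M_m:L^p(\real^m,w_\rho)\to L^p(\real^m,w_\rho)$ with a bound independent of $\rho$. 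Minkowski over $SO(n)$ then gives the strong $L^p(\real^n,w)$ bound with no $n$-dependence. The point is that the passage from $A_p$ to a maximal inequality is carried out once and for all in the fixed dimension $m$; the high-dimensional problem is reduced to it by rotations, not by a covering lemma in $\real^n$.

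Your bump-domination argument for the uniform $A_p$ bound is an interesting alternative to the paper's slicing lemma and may well be made rigorous (the super-exponential tails $e^{-2^k}$ in the shell index do dominate the geometric growth from doubling). But it does not by itself deliver the maximal inequality; you would still need something like the spherical-maximal route to close the argument.
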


\bigskip

This solves completely the problem of finding weighted $L^p$ bounds that are uniform in dimension for $p>1$. In this paper we present some partial results in the case $p=1$. In the next section we present some result relating the growth of the $A_1$ constant of a weight with the uniformity of the weak $L^1$ weighted bounds for the maximal operator over radial functions. The proofs of these results are contained in Section \ref{seccion.pruebas}. Finally, Section \ref{seccion.kakeya} is devoted to show some uniform bounds for the universal maximal operator and the Kakeya maximal operator over radial functions.

\bigskip

\section{The maximal operator over radial functions}

\bigskip

First, let us describe briefly the concepts and notation that we are going to deal with. We recall that $w\in A_1(\real^n)$ if and only if for some $C>0$ one has
\[
\frac{w(B_R(x))}{|B_R(x)|} \leq C \essinf_{y\in B_R(x)} w(y),\quad \mbox{a.e. } x\in\real^n,\ \forall R>0.
\]
Equivalently, $w\in A_1(\real^n)$ if
\[
Mw(x)\leq C\ w(x),\quad \mbox{a.e. } x\in \real^n.
\]
The smallest values of $C$ for which the previous inequalities hold will be denoted by $[w]_{A_1(\real^n)}$ and $[w]_{A_1(\real^n)}^\ast$ respectively. Both are usually called the $A_1$ constant of the weight.

\bigskip

For radial weights there is still another characterization of the $A_1$ class. We use that if $w$ is radial, then $Mw$ is pointwise comparable with $\mathcal A w$, where $\mathcal A$ is the maximal operator over centered rings given by
\[
\mathcal A u(x):= \sup_{0\leq a\leq |x|\leq b} \fint_{a\leq|y|\leq b} |u(y)|\,dy.
\]
More precisely one has:

\bigskip

\begin{lemma}\label{lemma.anillos}
Let $u(x)=u_0(|x|)$ be a radial function in $L_{\loc}^1(\real^n)$. Then there exists a constant $K_n$ only depending on the dimension such that for all $x\in\real^n$ one has 
\begin{equation}\label{comparacion.anillos}
\frac{1}{K_n}\ \mathcal A u(x) \leq Mu(x) \leq 2 \mathcal A u(x).
\end{equation}
\end{lemma}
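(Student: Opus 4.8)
Since $u$ is radial, both $Mu$ and $\mathcal{A}u$ depend only on $|x|$; fix $x$ and write $r=|x|$. The engine of the proof is a one--dimensional description of ball averages. Let $\mu$ be the measure on $[0,\infty)$ for which $\mu([a,b])$ equals the Lebesgue measure of the centred ring $\{a\le|y|\le b\}$ (so that $\int_a^b u_0\,d\mu=\int_{\{a\le|y|\le b\}}|u|$, and $\mathcal{A}u(x)=\sup\{\mu([a,b])^{-1}\int_a^b u_0\,d\mu:\ 0\le a\le r\le b\}$). Slicing $B_R(x)$ by the spheres $\{|y|=\rho\}$ gives, for every $R>0$,
\[
\fint_{B_R(x)}|u|=\frac{\displaystyle\int_0^\infty u_0(\rho)\,g_R(\rho)\,d\mu(\rho)}{\displaystyle\int_0^\infty g_R(\rho)\,d\mu(\rho)},\qquad \int_0^\infty g_R\,d\mu=|B_R(x)|,
\]
where $g_R(\rho)\in[0,1]$ is the proportion of the sphere $\{|y|=\rho\}$ contained in $B_R(x)$. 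Writing $|y-x|\le R$ in polar form, one checks that $g_R$ is continuous, vanishes outside $[(r-R)_+,\,r+R]$, equals $1$ on $[0,(R-r)_+]$, and is unimodal with maximum at $\rho^\ast:=\sqrt{(r^2-R^2)_+}\le r$; in particular $g_R$ is non-increasing on $[r,\infty)$.

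For $Mu(x)\le 2\,\mathcal{A}u(x)$ I would fix $R$ and split $B_R(x)$ along $\{|y|=r\}$ into $B^-=B_R(x)\cap\{|y|<r\}$ and $B^+=B_R(x)\cap\{|y|>r\}$. As $g_R$ is non-increasing on $(r,\infty)$, the function $g_R\mathbf{1}_{(r,\infty)}$ is a superposition of intervals $(r,b_t]$, each an admissible ring; a layer-cake computation then gives $\int_{B^+}|u|\le\mathcal{A}u(x)\,|B^+|$. When $R\ge r$ the inner region $\{|y|<r\}$ is itself an admissible ring, of measure $|B_r(0)|\le|B_R(x)|$, so $\int_{B^-}|u|\le\mathcal{A}u(x)\,|B_R(x)|$; adding the two bounds yields $\int_{B_R(x)}|u|\le\mathcal{A}u(x)\bigl(|B_R(x)|+|B^+|\bigr)\le 2\,\mathcal{A}u(x)\,|B_R(x)|$.

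The real work is the case $R<r$, where on $[r-R,r]$ the profile $g_R$ rises to its maximum $g_R(\rho^\ast)$ and then falls to $g_R(r)$; this is the step I expect to be the main obstacle. I would decompose $g_R\mathbf{1}_{[r-R,r]}=\min\!\bigl(g_R,g_R(r)\bigr)\mathbf{1}_{[r-R,r]}+\bigl(g_R-g_R(r)\bigr)_+$. The super-level sets of the first ``floor'' term, truncated at $r$, are admissible rings $(a,r]$, so it contributes at most $\mathcal{A}u(x)\,|B^-|$ exactly as for $B^+$; together with the outer part this already accounts for $\mathcal{A}u(x)\,|B_R(x)|$, leaving a slack $\mathcal{A}u(x)\,|B_R(x)|$ for the ``bump'' $\bigl(g_R-g_R(r)\bigr)_+$. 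The bump is supported strictly below $r$, has height $<1$, and is unimodal about $\rho^\ast$; writing it as a superposition of intervals $(c_s,d_s)\ni\rho^\ast$, enlarging each to the admissible ring $[c_s,r]$, and using the precise (parabolic, not spike-like) shape of $g_R$ near $\rho^\ast$, one reduces to one-variable inequalities for the profile --- for instance $g_R(\rho^\ast)\,\bigl(r^n-(\rho^\ast)^n\bigr)\le R^n$, which after $R=r\sin\beta$ becomes an elementary statement about $\int_0^\beta\sin^{n-2}\theta\,d\theta$ and in fact holds with room to spare. Keeping the bump inside the budget $2\,\mathcal{A}u(x)|B_R(x)|$ is the technical heart.

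Finally, for $\tfrac{1}{K_n}\,\mathcal{A}u(x)\le Mu(x)$ one must exhibit, for any admissible ring, a ball centred at $x$ that captures a dimensional proportion of its average. By the weighted mediant inequality we may assume the ring is $\{a\le|y|\le r\}$ or $\{r\le|y|\le b\}$. If it is ``fat'' ($a\le b/2$, e.g.\ $a=0$ or $b\ge 2r$), the inclusions $\{a\le|y|\le b\}\subseteq B_b(0)\subseteq B_{2b}(x)$ give $\fint_{B_{2b}(x)}|u|\ge 2^{-n}(1-2^{-n})\fint_{\{a\le|y|\le b\}}|u|$. If it is a thin shell about $r$, then for mass of $u_0$ at a level $\rho_0$ in the shell one takes the ball of radius $R$ with $\rho^\ast=\rho_0$ (if $\rho_0\le r$), or an optimally chosen radius otherwise, and the displayed formula shows that $\fint_{B_R(x)}|u|$ dominates the ring average up to a factor governed by the worst ``visible proportion'' of a sphere, that is, a power of $\bigl(\int_0^\pi\sin^{n-2}\theta\,d\theta\bigr)^{-1}$. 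Maximising over the configuration of $a,b$ produces $K_n$; only its finiteness for each $n$ is needed.
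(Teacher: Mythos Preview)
The paper itself does not prove this lemma: right after stating it, the authors point to \cite{MenarguezSoria2} for the sharp inequality $Mu(x)\le 2\tilde M_{v_n}u_0(|x|)$ and to \cite{DMOS} for the two--sided comparison with dimensional constants on both sides. So there is no in--house argument to compare your plan against.

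Your spherical--slicing framework and the properties you list for $g_R$ are correct, and the case $R\ge r$ of the upper bound is handled cleanly. The genuine gap is the ``bump'' step for $R<r$, which you rightly flag as the crux. After enlarging the level sets $(c_s,d_s)$ of the bump to $[c_s,r]$, the quantity you must control is
\[
\int_0^{H}\bigl(r^n-c_s^n\bigr)\,ds,\qquad H=g_R(\rho^\ast)-g_R(r),
\]
and your sample inequality $g_R(\rho^\ast)\bigl(r^n-(\rho^\ast)^n\bigr)\le R^n$ is not an upper bound for it: since every $c_s\le\rho^\ast$, the integrand is at least $r^n-(\rho^\ast)^n$, so your displayed quantity sits \emph{below} the integral, not above. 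Nor does the crude upper estimate $g_R(\rho^\ast)\cdot\bigl(r^n-(r-R)^n\bigr)$ (from $H\le g_R(\rho^\ast)$ and $c_s\ge r-R$) work: with $\beta=\arcsin(R/r)$ small one has $g_R(\rho^\ast)\sim\beta^{n-1}\big/\bigl((n-1)\int_0^\pi\sin^{n-2}\bigr)$ and $r^n-(r-R)^n\sim nRr^{n-1}$, and their product overshoots $R^n$ by a factor of order $\sqrt n$. Thus the ``precise parabolic shape'' you invoke is truly indispensable, yet you do not produce the one--variable inequality that would close the argument with constant~$2$. A smaller issue: your lower--bound sketch (``for mass of $u_0$ at a level $\rho_0$ one takes the ball of radius $R$ with $\rho^\ast=\rho_0$\ldots'') lets the test radius depend on where $u_0$ lives, which handles point masses but does not yield $\fint_A|u|\le K_n\,Mu(x)$ for a general nonnegative $u$ supported on the annulus; one must either exhibit a single $R$ serving the whole annulus or average over a family of radii. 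Since only finiteness of $K_n$ is needed there, that part is repairable, but as written it is not yet a proof. For the sharp upper constant you should consult \cite{MenarguezSoria2}; if a dimensional constant on the right is acceptable too, the route in \cite{DMOS} sidesteps the bump difficulty entirely.
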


\bigskip

When acting on radial functions $\mathcal A$ can be written as a one-dimensional maximal operator. Given a weight $v$ over $[0,\infty)$ the associated uncentered maximal operator is defined as
\[
\tilde M_v g(x)=\sup_{0\leq a\leq x\leq b}\ \frac{1}{v([a,b])}\int_a^b |g(y)|v(y)\,dy,
\]
for $g\in L_{\loc}^1([0,\infty),v)$. If $u(x)=u_0(|x|)$ is a radial function over $\real^n$, writing $v_n(t)=t^{n-1}$, note that $\mathcal A u(x) = \tilde M_{v_n} u_0(|x|)$.

\bigskip

As a consequence $w(x)=w_0(|x|)$ is in $A_1(\real^n)$ if and only if there exists a constant $C>0$ so that for a.e. $x$ one has
\[
\mathcal Aw(x)=\tilde M_{v_n}w_0(|x|)\leq C w_0(|x|)=Cw(x).
\] 
The smallest of such constants will be denoted by $(w)_{A_1(\real^n}$. With the usual arguments in weight theory one can see that the previous condition is equivalent to the existence of a constant $C>0$ so that for all intervals $\subset [0,\infty)$ one has
\[
\frac{w_0 v_n(I)}{v_n(I)}\leq C\essinf_{x\in I} w_0(x).
\]
The smallest of such constants is again $(w)_{A_1(\real^n)}$.

\bigskip

The inequality (\ref{comparacion.anillos}) has already appeared in the literature. In \cite{MenarguezSoria2} T. Me\-n\'ar\-guez and the second author used the inequality $Mu(x)\leq 2 \tilde M_{v_n} u_0(|x|)$ to prove (\ref{teo.orejas}) below. Inequality (\ref{comparacion.anillos}) in the form $c_n \mathcal Au(x)\leq Mu(x)\leq C_n \mathcal Au(x)$, with the constants depending on the dimension, was used by Duoandikoetxea, Moyua, Oruetxebarria and Seijo in \cite{DMOS}. From it they deduced that for any $p\geq 1$ the radial weights $w$ so that $M:L_{\rad}^p(w)\rightarrow L_{\rad}^p(w)$ boundedly 
are exactly the radial $A_p$ weights.

\bigskip

First, we point out that if $w\in A_1(\real^m)$ for some $m\in\mathbb N$, then $w\in A_1(\real^n)$ for all $n\geq m$ and $[w]_{A_1(\real^n)}^\ast$ and $(w)_{A_1(\real^n)}$ grow at most linearly with $n$.

\bigskip

\begin{lemma}\label{lemma.a1.siempre}
Let $w_0$ be a non-negative function over $[0,\infty)$ and set $w(x)=w_0(|x|)$ and $d\mu(x)=w(x)\,dx$. If for some $m\in\mathbb N$ one has $w\in A_1(\real^m)$ then $w\in A_1(\real^n)$ for all $n\geq m$ and $[w]_{A_1(\real^n)}^\ast\leq C_m \frac nm [w]_{A_1(\real^m)}^\ast$. The same holds for $(w)_{A_1(\real^n)}$.
\end{lemma}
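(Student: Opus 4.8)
The plan is to argue through the one-dimensional description of $(w)_{A_1(\real^n)}$ recalled above --- namely that it is the smallest $C$ for which
\[
\frac{(w_0v_n)(I)}{v_n(I)}\le C\,\essinf_{t\in I}w_0(t)
\]
holds for every interval $I\subset[0,\infty)$, where $v_n(t)=t^{n-1}$ --- and then to transfer the resulting $n$-dimensional information back to $M$ by means of Lemma~\ref{lemma.anillos}.

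First I would fix an interval $I=[a,b]$ with $0\le a<b$ and use the factorization $v_n(t)=v_m(t)\,t^{n-m}$. Since $n\ge m$, we have $t^{n-m}\le b^{n-m}$ for $t\in I$, hence $\int_I w_0v_n\le b^{n-m}\int_I w_0v_m$; letting $a\to0$ this already shows $w_0v_n\in L_{\loc}^1([0,\infty))$, i.e.\ $w\in L_{\loc}^1(\real^n)$. Dividing by $v_n(I)=\frac{b^n-a^n}{n}$ and rewriting the right-hand side through $v_m(I)=\frac{b^m-a^m}{m}$ yields
\[
\frac{(w_0v_n)(I)}{v_n(I)}\ \le\ \frac{n}{m}\cdot\frac{b^{n-m}(b^m-a^m)}{b^n-a^n}\cdot\frac{(w_0v_m)(I)}{v_m(I)}.
\]
With $r=a/b\in[0,1)$ the middle factor equals $\frac{1-r^m}{1-r^n}\le1$ since $r^n\le r^m$. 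Inserting the $A_1(\real^m)$ bound on this same interval $I$ and taking the supremum over all intervals gives $(w)_{A_1(\real^n)}\le\frac nm\,(w)_{A_1(\real^m)}<\infty$, which proves the assertion for $(w)_{A_1(\real^n)}$ (in fact with constant $1$).

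To handle $[w]_{A_1(\real^n)}^\ast$ I would interleave the two inequalities of Lemma~\ref{lemma.anillos}. From $Mw(x)\le2\mathcal Aw(x)=2\tilde M_{v_n}w_0(|x|)$ together with the definition of $(w)_{A_1(\real^n)}$ one obtains $[w]_{A_1(\real^n)}^\ast\le2(w)_{A_1(\real^n)}$; and from $\mathcal Aw(x)\le K_m\,Mw(x)$ together with the definition of $[w]_{A_1(\real^m)}^\ast$ one obtains $(w)_{A_1(\real^m)}\le K_m[w]_{A_1(\real^m)}^\ast$. Chaining these with the previous paragraph gives $[w]_{A_1(\real^n)}^\ast\le 2\,\frac nm\,(w)_{A_1(\real^m)}\le 2K_m\,\frac nm\,[w]_{A_1(\real^m)}^\ast$, so the statement holds with $C_m=2K_m$.

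No step is genuinely delicate. The one place that needs care is bounding $t^{n-m}$ by $b^{n-m}$ and not by a quotient like $(b/a)^{n-m}$: the latter would produce exponential rather than linear growth in $n$. It is exactly this estimate, together with the trivial inequality $\frac{1-r^m}{1-r^n}\le1$ for $0\le r<1$ and $m\le n$, that keeps the dependence on $n$ linear; one must likewise be careful to apply the $A_1(\real^m)$ condition on the very interval $I$, so that no additional constant is lost.
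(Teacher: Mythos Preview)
Your argument for $(w)_{A_1(\real^n)}$ is exactly the paper's: both bound $t^{n-m}\le b^{n-m}$ on $[a,b]$ and then observe $b^{n-m}(b^m-a^m)\le b^n-a^n$, which is precisely your inequality $\frac{1-r^m}{1-r^n}\le1$.

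For $[w]_{A_1(\real^n)}^\ast$ you take a genuinely different and more elementary route. The paper attacks $M_nw$ directly via Stein's method of rotations (Lemma~\ref{lemma.acotaciones}~a)), reducing $M_nw(x)$ to an average over $SO(n)$ of $M_m$ applied to the sliced weights $w_\rho(y)=w_0(\sqrt{\rho^2+|y|^2})$, and then invokes Lemma~\ref{lemma.dificil} to control $[w_\rho]_{A_1(\real^m)}$ uniformly in $\rho$. You instead simply sandwich through $(w)_{A_1}$ using the two inequalities of Lemma~\ref{lemma.anillos}: $[w]_{A_1(\real^n)}^\ast\le 2(w)_{A_1(\real^n)}$ and $(w)_{A_1(\real^m)}\le K_m[w]_{A_1(\real^m)}^\ast$, which together with your first paragraph close the loop. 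Your approach is shorter, self-contained, and avoids Lemmas~\ref{metodo.rotaciones}--\ref{lemma.dificil} entirely; the paper's route, while heavier here, introduces the rotation machinery that is in any case needed later for Theorem~\ref{teo.duoandi.vega}. Both yield a constant of the form $C_m\frac{n}{m}$ with $C_m$ depending only on $m$.
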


\bigskip

These bounds are almost optimal, let us see this with an example. Consider $w(x)=(1-|x|)^{-\alpha}$ with $0<\alpha<1$. It is easy to see that $w\in A_1(\real^n)$ for all $n\in\mathbb N$. Note that in $\real^n$ one has
\begin{equation}\label{cuenta.gamma}
\frac{w(B_1(0))}{|B_1(0)|}= \frac1{\omega_{n-1}/n}\int_{B_1} (1-|x|)^{-\alpha} \,dx = n\ \int_0^1 (1-t)^{-\alpha}t^{n-1}\,dt  = n\ \frac{\Gamma(1-\alpha)\Gamma(n)}{\Gamma(n+1-\alpha)},
\end{equation}
where we have used the equality
\[
\frac{\Gamma(x)\Gamma(y)}{\Gamma(x+y)} = \int_0^1 (1-t)^{x-1} t^{y-1}\,dt.
\]
Since $\Gamma$ is logarithmically convex, writing $n+1-\alpha=\alpha n+(1-\alpha)(n+1)$ one has $\Gamma(n+1-\alpha)\leq \Gamma(n)^\alpha\Gamma(n+1)^{1-\alpha}$, which together with (\ref{cuenta.gamma}) says that
\begin{equation}\label{medida.bola.unidad}
\frac{w(B_1(0))}{|B_1(0)|} \geq  \Gamma(1-\alpha)\ n^{\alpha}
\end{equation}
But note that $\inf_{y\in B_1(0)} w(y)=1$. This gives $[w]_{A_1(\real^n)}^\ast \geq \Gamma(1-\alpha)\ n^\alpha$. Since $\alpha$ can be taken arbitrarily close to $1$, this shows that the upper bound for $[w]_{A_1(\real^n)}^\ast$ in Lemma \ref{lemma.a1.siempre} is near to be optimal. The same calculation works for $(w)_{A_1(\real^n)}$.

\bigskip

There are radial weights for which the $A_1$ constants remain uniformly bounded in dimension. Indeed, we have the following characterization.

\bigskip

\begin{prop}\label{prop.a1.decreciente}
Let $w(x)=w_0(|x|)$ be a radial weight. The following statements are equivalent:
\begin{enumerate}[a)]
\item\label{conda} There exists $N>0$ so that $[w]_{A_1(\real^n)}$, $[w]_{A_1(\real^n)}^\ast$ or $(w)_{A_1(\real^n)}$ are uniformly bounded for all $n>N$,
\item\label{condb} There exists a constant $C>0$ so that for all $n>N$, all $f\in L_{\rad}^1(w)$ and $\lambda>0$ we have
\[
w(\{x\in\real^n: \mathcal Af(x)>\lambda\})\leq \frac C\lambda \int_{\real^n}|f(x)|w(x)\,dx.
\]
\item\label{condc} $w_0$ is essentially constant over dyadic intervals and decreasing up to a constant. This means that there exist positive constants $\beta$ and $\eta$ so that
\begin{eqnarray*}
&&\esssup_{r\in[R,2R]} w_0(r)\leq \beta  \essinf_{r\in[R,2R]} w_0(r),\quad \forall R>0,\\
&&w_0(s)\leq \eta\ w_0(t),\quad\forall s\geq t\geq 0.
\end{eqnarray*}
\end{enumerate}
\end{prop}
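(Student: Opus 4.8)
The plan is to prove the equivalences around the cycle $(\ref{condc})\Rightarrow(\ref{condb})\Rightarrow(\ref{conda})\Rightarrow(\ref{condc})$, treating $(\ref{condc})$ as the hub: I will show separately that $(\ref{condc})$ implies the uniform boundedness of each of $(w)_{A_1(\real^n)}$, $[w]^{\ast}_{A_1(\real^n)}$ and $[w]_{A_1(\real^n)}$ — the first directly; the second via $Mw\le 2\mathcal A w$ from Lemma~\ref{lemma.anillos}, which gives $[w]^{\ast}_{A_1(\real^n)}\le 2(w)_{A_1(\real^n)}$; the third by comparing ball averages of a radial function with the corresponding annulus averages after discarding the exponentially small spherical caps cut off by a Euclidean ball (the estimate behind the lower bound in \eqref{comparacion.anillos}) — and I will write the argument for $(\ref{conda})\Rightarrow(\ref{condc})$ so that it applies verbatim whichever of the three constants is assumed uniformly bounded. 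Throughout one uses the one–dimensional reformulation $\mathcal A u(x)=\tilde M_{v_n}u_0(|x|)$ and works with $\tilde M_{v_n}w_0$.

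For $(\ref{condc})\Rightarrow(\ref{condb})$ I would first check that $(\ref{condc})$ forces $(w)_{A_1(\real^n)}\le C(\beta,\eta)$ for all $n$ with $2^n\ge 2\beta^2$. Fix $r$ and an interval $I=[a,b]\ni r$. If $b<2a$ then $I$ meets at most two dyadic blocks, so $\frac{\int_I w_0 t^{n-1}}{\int_I t^{n-1}}\le\esssup_I w_0\le\beta^2\,w_0(r)$. If $b\ge 2a$, decompose $I$ into the dyadic blocks $Q_j=[b2^{-j-1},b2^{-j}]$, with $r\in Q_{j_0}$; then $\int_{Q_j}t^{n-1}\,dt\le \frac{b^n}{n}2^{-jn}$, while $\esssup_{Q_j}w_0\le\eta\,w_0(r)$ for the outer blocks $j<j_0$ (which lie at radii $>r$, by ``decreasing up to a constant''), $\esssup_{Q_{j_0}}w_0\le\beta\eta\,w_0(r)$ by ``essentially constant over dyadic intervals'', and for the inner blocks one gets $\esssup_{Q_j}w_0\le\beta^{2}\,\esssup_{Q_{j-1}}w_0$ (combining dyadic constancy of $w_0$ on $Q_j$ with dyadic constancy on the dyadic interval straddling the common endpoint of $Q_j$ and $Q_{j-1}$), hence $\esssup_{Q_j}w_0\le\beta^{2(j-j_0)+1}\eta\,w_0(r)$. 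Summing,
\[
\int_a^b w_0(t)\,t^{n-1}\,dt\ \lesssim_{\beta,\eta}\ w_0(r)\,\frac{b^n}{n}\sum_{j\ge 0}\big(\beta^{2}2^{-n}\big)^{j}\ \le\ 2\,w_0(r)\,\frac{b^n}{n},
\]
and $\int_a^b t^{n-1}\,dt\ge\frac{b^n}{2n}$ gives $\tilde M_{v_n}w_0(r)\lesssim_{\beta,\eta}w_0(r)$. Once $w_0$ is an $A_1$ weight with respect to the measure $v_n\,dr$ with constant $C$, the classical proof that an $A_1$ weight is of weak type $(1,1)$ for the associated uncentred maximal operator applies — one covers $\{\tilde M_{v_n}f_0>\lambda\}$ by intervals of bounded overlap (a dimension–free one–dimensional Vitali lemma) and uses $\frac{(w_0v_n)(J)}{v_n(J)}\le C\,\essinf_J w_0$ — yielding $\tilde M_{v_n}\colon L^1(w_0v_n)\to L^{1,\infty}(w_0v_n)$ with constant $\lesssim C$, which in polar coordinates is exactly $(\ref{condb})$.

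The implication $(\ref{condb})\Rightarrow(\ref{conda})$ is the usual testing argument: apply $(\ref{condb})$ to $f=\chi_E$ with $E$ a measurable subset of an annulus $A=\{a\le|y|\le b\}$; since $\mathcal A f\ge|E|/|A|$ on $A$, one gets $w(A)\le\frac C\lambda w(E)$ for $\lambda<|E|/|A|$, hence $\frac{w(A)}{|A|}\le C\frac{w(E)}{|E|}$, and shrinking $E$ to a set on which $w_0\to\essinf_{[a,b]}w_0$ gives $\frac{(w_0v_n)(I)}{v_n(I)}\le C\,\essinf_I w_0$ for every interval $I$, i.e. $(w)_{A_1(\real^n)}\le C$ (and then $[w]^{\ast}_{A_1(\real^n)}$, $[w]_{A_1(\real^n)}$ are uniformly bounded as above). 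For $(\ref{conda})\Rightarrow(\ref{condc})$, assume $(w)_{A_1(\real^n)}\le C$ for all $n>N$. To get ``decreasing up to a constant'', test the $A_1$ condition on $I=[0,R]$, namely $\frac{n}{R^n}\int_0^R w_0(t)t^{n-1}\,dt\le C\,\essinf_{[0,R]}w_0$: the probability measures $\frac{nt^{n-1}}{R^n}\mathbf{1}_{[0,R]}(t)\,dt$ concentrate at $t=R$ as $n\to\infty$, so by a Fatou–type argument the left side is, in the limit, at least the essential lower limit of $w_0$ at $R$ from the left; together with $\essinf_{[0,R]}w_0\le w_0(t)$ for a.e. $t<R$ and a suitable choice of representative, this reads $w_0(R)\le C\,w_0(t)$ for a.e. $t\le R$, i.e. $\eta=C$ works. (This is precisely what rules out weights such as $(1-|x|)^{-\alpha}$, in $A_1(\real^n)$ for every $n$ but not uniformly.) For ``essentially constant over dyadic intervals'', fix one large $n>N$ and test on $I=[0,2R]$: if $w_0$ dropped by a factor $\gamma$ across $[R,2R]$, then by the property just obtained $w_0\ge\gamma\,\essinf_{[R,2R]}w_0/\eta$ on the whole block $[R/2,R]$, contributing a fixed multiple of $\gamma\,2^{-n}\,\essinf_{[0,2R]}w_0$ to the average over $[0,2R]$; comparing with $C\,\essinf_{[0,2R]}w_0$ forces $\gamma\le (\text{const depending on }C,\eta)\,2^{n}$, a bound independent of $R$, i.e. $\esssup_{[R,2R]}w_0\le\beta\,\essinf_{[R,2R]}w_0$. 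The same two steps, using origin–centred Euclidean balls and the cap estimates behind \eqref{comparacion.anillos} in place of annuli, also produce $(\ref{condc})$ from a uniform bound on $[w]^{\ast}_{A_1(\real^n)}$ or on $[w]_{A_1(\real^n)}$.

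The step I expect to be the main obstacle is $(\ref{conda})\Rightarrow(\ref{condc})$, and within it ``decreasing up to a constant'': this is where one must genuinely exploit uniformity in $n$ — for a fixed dimension the $A_1$ condition forbids unbounded multiplicative jumps but not integrable interior blow-ups — and the concentration of $t^{n-1}\,dt$ at the right endpoint of $[0,R]$ has to be quantified carefully against the possible growth of $w_0$ there. A secondary, more routine but still delicate point is keeping the comparison between ball averages and annulus averages of radial functions precise enough, in the spirit of \eqref{comparacion.anillos}, so that $[w]_{A_1(\real^n)}$ and $[w]^{\ast}_{A_1(\real^n)}$ can be carried through without losing a power of the dimension.
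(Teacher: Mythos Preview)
Your cycle $(\ref{condc})\Rightarrow(\ref{condb})\Rightarrow(\ref{conda})\Rightarrow(\ref{condc})$ and the individual arguments are essentially the paper's: the dyadic block decomposition for $(\ref{condc})\Rightarrow$ uniform bound on $(w)_{A_1}$, the one-dimensional covering lemma for the weak type, and the testing argument for $(\ref{condb})\Rightarrow(\ref{conda})$ all match the paper (which quotes Young's selection principle where you say Vitali). For ``essentially constant over dyadic intervals'' in $(\ref{conda})\Rightarrow(\ref{condc})$ your test on $[0,2R]$ is a valid alternative to the paper's argument via the doubling property and three adjacent balls; both use the $A_1$ condition only in one fixed dimension together with the already obtained almost-decreasing property.

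The one genuine gap is in your treatment of the $[w]^{\ast}_{A_1}$ case of $(\ref{conda})\Rightarrow(\ref{condc})$, specifically ``decreasing up to a constant''. Your concentration argument for $(w)_{A_1}$ (and, via origin-centred balls, for $[w]_{A_1}$) is correct: the measures $n t^{n-1}R^{-n}\,dt$ concentrate at $R$, and at Lebesgue points of $w_0$ one recovers $w_0(R)\le C\,\essinf_{[0,R]}w_0$. But this does \emph{not} transfer to $[w]^{\ast}_{A_1}$ by ``origin-centred Euclidean balls'': the hypothesis $Mw(x)\le C\,w(x)$ is a pointwise bound at $x$, and to compare $w_0(t)$ with $w_0(s)$ for $s<t$ you must test at a point $x$ with $|x|=s$ and look at a ball $B_R(x)$ centred at $x$, not at the origin. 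Since $[w]^{\ast}_{A_1}$ is the weakest of the three constants (indeed $[w]^{\ast}_{A_1}\le[w]_{A_1}$ and $[w]^{\ast}_{A_1}\le 2(w)_{A_1}$), this case cannot be reduced to the others and must be handled directly. The paper does exactly this via Lemma~\ref{lemma.differentiation} (``differentiation through dimensions''): for $|x_n|=s$ and $R^2=t^2-s^2$ one has $\fint_{B_R(x_n)}w\to w_0(t)$ as $n\to\infty$ for a.e.\ $t$, which combined with $\fint_{B_R(x_n)}w\le Mw(x_n)\le C\,w_0(s)$ gives $w_0(t)\le C\,w_0(s)$. This is the same high-dimensional concentration idea behind your one-dimensional argument, but carried out for balls centred at radius $s$ rather than at the origin; the ``cap estimates behind \eqref{comparacion.anillos}'' alone, without tracking where the mass of such a ball concentrates, do not give a dimension-free comparison in this direction.
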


\bigskip

Power weights with negative powers, that is $w_0(t)=t^{-\alpha}$ with $\alpha\geq 0$, are examples of weights with these properties. It is easy to check that they satisfy condition $\ref{condc})$ with $\beta=2^\alpha$ and $\eta=1$.

\bigskip

Moreover, for the weights satisfying the properties in Proposition \ref{prop.a1.decreciente}, the maximal operator $M$ acting over radial functions is weakly bounded in weighted $L^1$ with constants uniformly bounded in dimension. This was already known for $w\equiv1$. M.T. Men\'arguez and the second author proved in \cite{MenarguezSoria2} that for all radial $f$ over $\real^n$ and $\lambda>0$ one has
\begin{equation}\label{teo.orejas}
|\{x\in\real^n:Mf(x)>\lambda\}| \leq \frac 4\lambda\|f\|_{L^1(\real^n,dx)}.
\end{equation}
We point out the following extension, that is a corollary of part $\ref{condb})$ of Proposition \ref{prop.a1.decreciente} and (\ref{comparacion.anillos}).

\bigskip

\begin{teo}\label{teorema.radiales}
Let $w(x)=w_0(|x|)$ be a radial weight. Assume that there exist $N>0$ and $C>0$ so that $(w)_{A_1(\real^n)}<C$ for all $n>N$. Then for all radial $f$ over $\real^n$ with $n\geq N$ and $\lambda>0$ one has
\begin{equation}\label{acotacion.radiales}
w(\{x\in\real^n:Mf(x)>\lambda\}) \leq \frac {4C}\lambda\int|f(x)| w(x)\,dx.
\end{equation}
\end{teo}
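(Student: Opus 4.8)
The plan is to deduce (\ref{acotacion.radiales}) from the weak-type bound for the ring maximal operator $\mathcal A$ recorded in part \ref{condb}) of Proposition \ref{prop.a1.decreciente}, together with the pointwise comparison (\ref{comparacion.anillos}). First I would use that $f$ is radial: by Lemma \ref{lemma.anillos} one has $Mf(x)\le 2\,\mathcal A f(x)$ for every $x\in\real^n$, so
\[
\{x\in\real^n:Mf(x)>\lambda\}\subseteq\{x\in\real^n:\mathcal A f(x)>\lambda/2\},
\]
and hence $w(\{Mf>\lambda\})\le w(\{\mathcal A f>\lambda/2\})$. It is therefore enough to prove that, under the hypothesis $(w)_{A_1(\real^n)}<C$, one has
\[
w(\{x\in\real^n:\mathcal A f(x)>\mu\})\le \frac{2C}{\mu}\int_{\real^n}|f|\,w\qquad(\mu>0);
\]
applying this with $\mu=\lambda/2$ produces exactly the constant $4C$ in (\ref{acotacion.radiales}).

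For this weak-type bound I would pass to the one-dimensional picture. Writing $f(x)=f_0(|x|)$ and $d\nu:=v_n(t)\,dt=t^{n-1}\,dt$, the identity $\mathcal A f(x)=\tilde M_{v_n}f_0(|x|)$ shows that $\{\mathcal A f>\mu\}$ is the radial set lying over $E:=\{t\ge 0:\tilde M_{v_n}f_0(t)>\mu\}$; integrating in polar coordinates (the surface constant $\omega_{n-1}$ cancels) the displayed inequality reduces to $(w_0\nu)(E)\le \frac{2C}{\mu}\int_0^\infty |f_0|\,w_0\,d\nu$, where $w_0\nu$ denotes the measure with density $w_0v_n$. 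Now, as recalled in the text, the hypothesis $(w)_{A_1(\real^n)}<C$ says precisely that $w_0$ is an $A_1$ weight for $\nu$, i.e. $(w_0\nu)(I)\le C\,\nu(I)\,\essinf_{I}w_0$ for every interval $I\subseteq[0,\infty)$. I would then run the standard $A_1$ argument on the half-line: for each $t\in E$ choose an interval $I_t\ni t$ with $\nu(I_t)<\frac1\mu\int_{I_t}|f_0|\,d\nu$, extract from $\{I_t\}_{t\in E}$ a countable subfamily $\{I_j\}$ that still covers $E$ and has overlap at most $2$, and estimate
\[
(w_0\nu)(E)\le\sum_j(w_0\nu)(I_j)\le C\sum_j\Big(\essinf_{I_j}w_0\Big)\nu(I_j)<\frac{C}{\mu}\sum_j\int_{I_j}|f_0|\,w_0\,d\nu\le\frac{2C}{\mu}\int_0^\infty|f_0|\,w_0\,d\nu.
\]
This is exactly the content of the implication \ref{conda})$\Rightarrow$\ref{condb}) of Proposition \ref{prop.a1.decreciente}, with the constant kept explicit.

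The one delicate point — and the reason the argument above is not completely immediate — is the covering step, which forces the usual truncation preliminaries. The one-dimensional covering lemma (any family of intervals in $\real$ of uniformly bounded length covering a bounded set admits a countable subfamily with the same union and overlap $\le2$) requires the chosen $I_t$ to be bounded, which is not automatic here since $\nu$ blows up at infinity. I would first establish the inequality for $f_0$ with compact support: in that case, for $t$ large any interval $I_t$ with a large $\nu$-average of $f_0$ must stretch down to $\operatorname{supp}f_0$, hence has $\nu(I_t)$ large, which forces $E$ and all the relevant $I_t$ to be bounded; the general case $f_0\in L^1(w_0\,d\nu)$ then follows by monotone convergence. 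Beyond this, the proof is bookkeeping — one only has to keep track of the two factors of $2$, one coming from $Mf\le2\,\mathcal A f$ and one from the overlap of the covering, to land on the constant $4C$ asserted in (\ref{acotacion.radiales}).
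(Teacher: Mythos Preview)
Your proposal is correct and follows essentially the same route as the paper: the paper states this theorem as an immediate corollary of part \ref{condb}) of Proposition~\ref{prop.a1.decreciente} together with the pointwise inequality $Mf\le 2\,\mathcal A f$ from (\ref{comparacion.anillos}), and in the proof of the implication \ref{conda})$\Rightarrow$\ref{condb}) of that proposition the weak-type bound for $\mathcal A$ is obtained with constant $2(w)_{A_1(\real^n)}$ via exactly the covering-plus-$A_1$ argument you describe (the paper phrases the covering step as Young's selection principle). Your extra discussion of the truncation needed to justify the covering lemma is a reasonable technical caution that the paper leaves implicit by citing Garsia's book.
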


\bigskip

Note that if $w$ is a radial weight so that (\ref{acotacion.radiales}) holds for radial functions over $\real^k$, then $w\in A_1(\real^k)$. If $w$ is decreasing up to a constant, the argument leading to (\ref{comparacion.bolas}) in the proof of Proposition \ref{prop.a1.decreciente} below, shows that $w$ is essentially constant over dyadic intervals. Hence, by Proposition \ref{prop.a1.decreciente} we are in the hypothesis of Theorem \ref{teorema.radiales} and we have (\ref{acotacion.radiales}) in $\real^n$ for all $n\geq k$ with a constant independent of $n$.

\bigskip

We finish remarking that for the weights $w$ characterized in Proposition \ref{prop.a1.decreciente} $Mw$ and $\mathcal Aw$ are comparable with constants independent of the dimension because one always has
\[
\mathcal Aw(x)\leq (w)_{A_1(\real^n)} w(x) \leq (w)_{A_1(\real^n)} Mw(x).
\]
It is easy to find examples of radially increasing functions, for instance $w_0=t^\alpha$ with $\alpha>0$, so that $\mathcal Aw(x)=Mw(x)=\infty$ for all $x$. Therefore $\mathcal Aw\leq C Mw$ with $C$ independent of the dimension does not imply any of the conditions $\ref{conda})$, $\ref{condb})$, $\ref{condc})$.

\bigskip

\section{Proofs of the main results}\label{seccion.pruebas}

\bigskip

We begin proving Proposition \ref{prop.a1.decreciente}. We mention that the equivalence $\ref{conda})\Leftrightarrow\ref{condc})$ already appeared in \cite{CriadoSoria} for $[w]_{A_1}$. Here we will use similar arguments. Among them, the method of differentiation through dimensions also presented in \cite{CriadoSoria} that is contained in the following Lemma.

\bigskip

\begin{lemma}\label{lemma.differentiation}
Take $w_0\in L_{\loc}^1([0,\infty),t^{N-1}\,dt)$ for some $N\geq 1$. Then, for almost every $T>0$ and for all $s\geq 0$ and $R>0$ so that $s^2+R^2=T^2$, if we take points $z^n \in \real^n$ with $|z^n|=s$ and we denote $B(z^n,R)=\{y\in\real^n: |z^n-y|< R\}$, the following holds
\[
\lim_{n\rightarrow\infty} \fint_{B(z^n,R)} w_0(|x|)\, dx = w_0(T).
\]
\end{lemma}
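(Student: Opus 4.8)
The plan is to understand, for fixed $s\ge0$ and $R>0$ with $s^{2}+R^{2}=T^{2}$, the law $\mu_{n}$ of $|x|$ when $x$ is distributed uniformly over $B(z^{n},R)$, i.e.\ the push-forward of $|B(z^{n},R)|^{-1}\,dx$ under $x\mapsto|x|$, so that $\fint_{B(z^{n},R)}w_{0}(|x|)\,dx=\int w_{0}\,d\mu_{n}$. (Note first that $w_{0}\in L^{1}_{\loc}([0,\infty),t^{N-1}dt)$ forces $w_{0}\in L^{1}_{\loc}(\real^{n})$ for every $n\ge N$ --- split a compact set into a piece near $0$, where $t^{n-1}\le t^{N-1}$, and a piece bounded away from $0$ --- so all these integrals converge.) Assume $s>0$ and put $e=z^{n}/s$. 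From $|x|^{2}=s^{2}+2s\langle e,x-z^{n}\rangle+|x-z^{n}|^{2}$ and the elementary identities $\fint_{B(0,R)}\langle e,y\rangle\,dy=0$, $\fint_{B(0,R)}|y|^{2}\,dy=\tfrac{nR^{2}}{n+2}$, $\fint_{B(0,R)}\langle e,y\rangle^{2}\,dy=\tfrac1n\fint_{B(0,R)}|y|^{2}\,dy$, $\fint_{B(0,R)}|y|^{4}\,dy=\tfrac{nR^{4}}{n+4}$ (and $\fint_{B(0,R)}\langle e,y\rangle|y|^{2}\,dy=0$), one computes that under $\mu_{n}$ the variable $\rho^{2}$ has mean $s^{2}+\tfrac{nR^{2}}{n+2}\to T^{2}$ and variance $O(1/n)\to0$. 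Since every $\mu_{n}$ is supported in the fixed compact set $[0,s+R]$, this gives $\mu_{n}\to\delta_{T}$ weakly (the same being trivially true when $s=0$). In particular the lemma is immediate for bounded continuous $w_{0}$.

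For a general $w_{0}$ it suffices to prove $\int|w_{0}(\rho)-w_{0}(T)|\,d\mu_{n}(\rho)\to0$ at every Lebesgue point $T$ of $w_{0}$, since these form a full-measure set not depending on the choice of $s$ and $R$. For $s>0$ I would use the explicit density of $\mu_{n}$: passing to polar coordinates, the condition $|\rho\omega-z^{n}|<R$ becomes $\langle\omega,e\rangle>c(\rho)$ with $c(\rho):=\tfrac{\rho^{2}+s^{2}-R^{2}}{2\rho s}$, so $\tfrac{d\mu_{n}}{d\rho}(\rho)=\tfrac{n\rho^{n-1}}{R^{n}}\,\sigma_{n-1}(\{\omega\in S^{n-1}:\langle\omega,e\rangle>c(\rho)\})$, where $\sigma_{n-1}$ is the normalized surface measure on $S^{n-1}$. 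The key input is the standard spherical-cap estimate $\sigma_{n-1}(\langle\cdot,e\rangle>c)\le\tfrac{C}{c\sqrt n}(1-c^{2})^{(n-1)/2}$ for $c\ge c_{0}>0$. Since $c(T)=s/T\in(0,1)$ and $1-c(\rho)^{2}=\tfrac{R^{2}}{\rho^{2}}\big(1-\tfrac{(\rho^{2}-T^{2})^{2}}{4R^{2}s^{2}}\big)$, substituting and cancelling the powers of $\rho$ gives, on a fixed neighbourhood of $T$,
\[
\frac{d\mu_{n}}{d\rho}(\rho)\le C\sqrt n\,\Big(1-\frac{(\rho^{2}-T^{2})^{2}}{4R^{2}s^{2}}\Big)^{(n-1)/2}\le C\sqrt n\,e^{-cn(\rho-T)^{2}},
\]
that is, a genuine approximate-identity kernel of scale $n^{-1/2}$. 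On the rest of the support of $\mu_{n}$ the factor $\big(1-(\rho^{2}-T^{2})^{2}/4R^{2}s^{2}\big)^{(n-1)/2}$ is exponentially small in $n$; and on the piece near the origin --- which occurs only when $s<R$, while $T$ stays bounded away from $0$ --- the crude bound $\tfrac{d\mu_{n}}{d\rho}\le\tfrac{n\rho^{n-1}}{R^{n}}$ together with $w_{0}\in L^{1}_{\loc}(\rho^{n-1}d\rho)$ shows that $\int_{\rho<T/2}|w_{0}|\,d\mu_{n}\to0$.

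Granted these estimates, the conclusion follows from the usual Lebesgue-point argument. On $\{|\rho-T|<n^{-1/2}\}$ the density is $\le C\sqrt n$, so this part contributes at most $C\sqrt n\int_{|\rho-T|<n^{-1/2}}|w_{0}-w_{0}(T)|$, which tends to $0$ because $\int_{|\rho-T|<\delta}|w_{0}-w_{0}(T)|=o(\delta)$ as $\delta\to0$. On $n^{-1/2}\le|\rho-T|\lesssim1$ a dyadic splitting $2^{k}n^{-1/2}\le|\rho-T|<2^{k+1}n^{-1/2}$, together with the Gaussian bound and again the Lebesgue-point property, produces a series whose $k$-th term is bounded by $2^{k+1}e^{-c4^{k}}$ times a quantity tending to $0$, so the sum tends to $0$; what remains --- bounded away from $T$, together with the neighbourhood of the origin --- vanishes by the two estimates of the previous paragraph. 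When $s=0$ the whole statement reduces to the classical fact that $n\int_{0}^{1}w_{0}(Rt)t^{n-1}dt\to w_{0}(R)$ at (left) Lebesgue points.

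The main obstacle is the density estimate, and specifically getting the spherical-cap tail with the correct power $n^{-1/2}$: the cheaper bound $\sigma_{n-1}(\langle\cdot,e\rangle>c)\le(1-c^{2})^{(n-1)/2}$ loses a factor $\sqrt n$ and would yield a kernel of total mass $\sim n$, too heavy to run the Lebesgue-point argument. The rest is bookkeeping --- matching the behaviour of $\mu_{n}$ at the $n^{-1/2}$-scale around $T$, in the bulk of its support, and near the origin, where the integrability one must exploit is that of $w_{0}$ against $\rho^{n-1}d\rho$ rather than against $d\rho$ --- plus the separate, classical treatment of the degenerate case $s=0$.
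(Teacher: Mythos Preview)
The paper does not prove this lemma; it is quoted from \cite{CriadoSoria} as ``the method of differentiation through dimensions'' and used as a black box in the proof of Proposition~\ref{prop.a1.decreciente}. So there is no argument here to compare against, and I can only comment on the internal soundness of your proposal.

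Your strategy is correct and the key computations check out: the moment calculation giving $\mathbb{E}\rho^{2}\to T^{2}$ and $\mathrm{Var}\,\rho^{2}=O(1/n)$, the density formula via polar coordinates, the identity $1-c(\rho)^{2}=(R^{2}/\rho^{2})\bigl(1-(\rho^{2}-T^{2})^{2}/(4R^{2}s^{2})\bigr)$, the spherical-cap tail with the essential $1/\sqrt n$ factor, and the resulting Gaussian profile $C\sqrt{n}\,e^{-cn(\rho-T)^{2}}$ near $T$. The Lebesgue-point argument with dyadic shells is standard and works as you describe.

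There is one organizational gap to patch. When $s<R$, the function $c(\rho)$ vanishes at $\rho_{0}:=\sqrt{R^{2}-s^{2}}$ and is nonpositive below it, and your refined density bound (which needs $c(\rho)\ge c_{0}>0$) is not valid there. You invoke the crude bound $d\mu_{n}/d\rho\le n\rho^{n-1}/R^{n}$ only on $\{\rho<T/2\}$, but $\rho_{0}>T/2$ whenever $s<R\sqrt{3/5}$, so the strip $(T/2,\rho_{0}]$ is left uncovered: the ``factor'' $\bigl(1-(\rho^{2}-T^{2})^{2}/(4R^{2}s^{2})\bigr)^{(n-1)/2}$ is indeed small there, but it is no longer a bound for the density since $\sigma_{n-1}(\langle\cdot,e\rangle>c)\ge\tfrac12$ once $c\le0$. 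The fix is painless: replace the split at $T/2$ by a split at $R-\eta$ for any small fixed $\eta\in\bigl(0,\,R-\rho_{0}\bigr)$. On $[0,R-\eta]$ the crude bound is exponentially small (since $\rho/R\le1-\eta/R<1$) and $w_{0}$ is integrable there against $\rho^{n-1}d\rho$ by the hypothesis; on $[R-\eta,\,s+R]$ one has $c(\rho)\ge c(R-\eta)>0$ because $c'(\rho)=(\rho^{2}+R^{2}-s^{2})/(2\rho^{2}s)>0$ when $s<R$, so your Gaussian estimate applies throughout. With this adjustment the argument is complete.
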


\bigskip

We now proceed to prove Proposition \ref{prop.a1.decreciente}.

\bigskip

\begin{proof}[Proof of Proposition \ref{prop.a1.decreciente}]
It is easy to see that 
\begin{eqnarray}
[w]_{A_1}^\ast &\leq& [w]_{A_1},\label{des1}\\ 
\,[w]_{A_1}^\ast &\leq& 2(w)_{A_1},\label{des2}
\end{eqnarray}

\bigskip

Let us prove $\ref{conda})\Rightarrow \ref{condc})$. Assume that for $n\geq N$ one has $[w]_{A_1(\real^n)}^\ast\leq C_\ast$ with $C_\ast$ independent of $n$. First, we prove that $w_0$ is decreasing up to a constant. Assume that $t\geq s\geq 0$ and for each $n\geq N$ take $x_n,y_n\in \real^n$ so that $x_n=\alpha y_n$ and $|x_n|=s$, $|y_n|=t$. Consider the ball $B_R(x_n)$ with $R^2=t^2-s^2$. By hypothesis we have, in the almost everywhere sense,
\[
\frac{w(B_R(x_n))}{|B_R(x_n)|}\leq C_\ast\ w(x_n)= C_\ast\ w_0(s).
\]
In view of Lemma \ref{lemma.differentiation} we take limits as $n\rightarrow\infty$ and obtain
\[
w_0(t)=w_0(\sqrt{s^2+R^2})\leq C_\ast\ w_0(s),
\]
for almost every $s\leq t$.

\bigskip

In order to ensure that $w_0$ is essentially constant over dyadic intervales, we only need to prove that for all  $R>0$, $w_0(R)\leq Cw_0(2R)$ with $C$ independent of $R$. Take $n\geq N$ and consider $x\in \real^n$ with $|x|=1$ and the balls $B=B_{R/2}((R/2)x)$, $B^\ast=B_{R/2}((3/2)x)$ and $B^{\ast\ast}=B_{R/2}((5/2)x)$. Since $w\in A_1(\real^n)$ it is doubling and for some constant $K$ we have $w(B)\leq K w(B^\ast) \leq K^2 w(B^{\ast\ast})$. By the decreasing property of $w_0$ that we have just proved, for all $y\in B$ we have $w_0(R)\leq C_\ast w(y)$ and for all $y\in B^{\ast\ast}$ we have $w(y)\leq C_\ast w_0(2R)$. This yields
\begin{equation}\label{comparacion.bolas}
w_0(R) \leq C_\ast\ \frac{w(B)}{|B|} \leq C_\ast K^2\ \frac{w(B^{\ast\ast})}{|B^{\ast\ast}|} \leq (C_\ast K)^2\ w_0(2R).
\end{equation}

\bigskip

We now proceed with $\ref{condc})\Rightarrow\ref{conda})$. In view of (\ref{des1}) and (\ref{des2}) it is enough to see that $(w)_{A_1}$ and $[w]_{A_1}$ are uniformly bounded. To see the first, consider an interval $[a,b]\subset [0,\infty)$. If $b\leq 2a$, we have
\[
\frac{w_0v_n([a,b])}{v_n([a,b])} \leq \esssup_{a\leq r\leq b} w_0(r)\leq \esssup_{a\leq r\leq 2a} w_0(r) \leq \beta \essinf_{a\leq r\leq 2a} w_0(r)\leq \beta \essinf_{a\leq r\leq b} w_0(r).
\]
If $b>2a$ we use the fact that under the hypothesis that $w_0$ is essentially constant over dyadic intervals for any $R>0$ we have
\begin{eqnarray*}\label{hardy}
w_0v_n\ ([0,R]) &=& \sum_{k=1}^\infty \int_{2^{-k}R}^{2^{-k+1}R} w_0(t)t^{n-1}\,dt\ \leq\ \sum_{k=1}^\infty \beta^k w_0(R)\ \frac{2^{(-k+1)n}-2^{-kn}}{n}\nonumber\\ &\leq& \frac{2^nR^n}{n}\  w_0(R)\ \sum_{k=1}^\infty \left(\frac{\beta}{2^n}\right)^k\ \leq\ 2\beta\ w_0(R)\ v_n([0,R]),
\end{eqnarray*}
the last inequality provided we take $n>N= \log_2 \beta+1$. Using this
\[
\frac{w_0v_n([a,b])}{v_n([a,b])} \leq \frac{w_0v_n([0,b])}{v_n([a,b])} \leq 2\beta w_0(b)\ \frac{v_n([0,b])}{v_n([a,b])},
\]
and this is all we need. Note that by the decreasing property of $w_0$ we have $w_0(b)\leq \essinf_{a\leq r\leq b} w_0(r)$ and
\[
\frac{v_n([0,b])}{v_n([a,b])} = \frac{b^n}{b^n-a^n} \leq \frac{b^n}{b^n-(b/2)^n}\leq 2.
\]

\bigskip

One proves the uniform boundedness of $[w]_{A_1}$ in a similar way (see \cite{CriadoSoria} for the details).

\bigskip

To see the equivalence $\ref{conda})\Leftrightarrow\ref{condb})$ observe that $\ref{condb})$ is equivalent to
\begin{equation}\label{nocentrado.uniforme}
w_0v_n\Bigl(\{r\geq0\,:\,\tilde M_{v_n} f_0(r)>\lambda\}\Bigr)\leq \frac C\lambda \int_0^\infty |f_0(r)|w_0(r)v_n(r)\,dr,
\end{equation}
with $C$ independent of $f_0$, $n$ and $\lambda$. For the implication $\ref{conda})\Rightarrow\ref{condb})$ assume that $(w)_{A_1}$ is uniformly bounded. Consider a radial function $f(x)=f_0(|x|)$ over $\real^n$ and $\lambda>0$. The set
$E_\lambda= \{r\geq0\,:\,\tilde M_{v_n} f_0(r)>\lambda\}$ is the union of all the intervals $I\subset [0,\infty)$ verifying
\[
\frac1{v_n(I)} \int_I |f_0(t)| v_n(t)\,dt> \lambda.
\]
We use Young's selection principle (see Lemma 4.2.1 in \cite{Garsia}) to obtain a subset $\mathcal I$ of such intervals so that $E_\lambda\subset \bigcup_{I\in \mathcal I}$ and $\sum_{I\in \mathcal I} \chi_I \leq 2$. 

\bigskip

For each $I\in A$ by our assumption we have
\begin{eqnarray*}
w_0v_n\,(I)&\leq& (w)_{A_1(\real^n)}\ v_n(I)\ \essinf_{I} w_0 \leq \frac{(w)_{A_1(\real^n)}}\lambda  \int_I |f_0(t)|\,v_n(t)\,dt\ \essinf_{I}w_0  \\&\leq&\frac{(w)_{A_1(\real^n)}}\lambda \int_I |f_0(t)|\,w_0(t)\,v_n(t)\,dt
\end{eqnarray*}
and hence,
\[
w_0v_n(E_\lambda) \leq \sum_{I\in \mathcal I} w_0v_n (I)\ \leq \frac{2(w)_{A_1(\real^n)}}\lambda \int_0^\infty |f_0(t)|\,w_0(t)\,v_n(t)\,dt
\]

\bigskip

We finish with the implication $\ref{condb})\Rightarrow\ref{conda})$. Consider intervals $J\subset I\subset [0,\infty)$, take $f_0=\chi_J$ and $\lambda=w_0v_n(J)/v_n(I)$. Then $I\subset E_\lambda$ and by (\ref{nocentrado.uniforme}) we have
\[
w_0v_n(I)\leq w_0v_n(E_\lambda) \leq \frac C\lambda º\ w_0v_n(J),
\]
or equivalently
\[
\frac{w_0v_n(I)}{v_n(I)} \leq C \frac{w_0v_n(J)}{v_n(J)}.
\]
Since $J$ is arbitrary, this implies that
\[
\frac{w_0v_n(I)}{v_n(I)}\leq C \essinf_{t\in I} w_0(t).
\]
\end{proof}

\bigskip

Our next goal is to show Lemma \ref{lemma.a1.siempre} and Theorem \ref{teo.duoandi.vega}. The technical parts of the proofs are summarized in the following lemmas. The first one, due to Stein (see \cite{SteinStromberg}), provides a method of rotations that reduces the dimension when estimating the mean values over balls.

\bigskip

\begin{lemma}\label{metodo.rotaciones} Let $k<n$ be natural numbers. For each $x=(x^1,\ldots,x^n)\in\real^n$ we call $x_1=(x^1,\ldots,x^k)\in \real^k$ and $x_2=(x^{k+1},\ldots,x^n)\in \real^{n-k}$. By abuse of the language we will write $x=(x_1,x_2)$. For any positive and measurable function $f$ on $\real^n$ one has
\[
\frac{\int_{|x|<R}f(x)\,dx}{\int_{|x|<R}\,dx} = \frac{\int_{SO(\real^n)}\int_{|x_1|<R}f(\tau(x_1,0))|x_1|^{n-k}\,dx_1\,d\tau} {\int_{|x_1|<R}|x_1|^{n-k}\,dx_1},
\]
where $SO(\real^n)=\{\tau\in\mathcal M_{n\times n}(\real):\tau\tau^{t}=\tau^{t}\tau=I\}$, i.e. the special orthonormal group in $\real^n$ equipped with its Haar measure.
\end{lemma}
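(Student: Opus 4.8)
The plan is to show that both sides of the asserted identity equal the single one-dimensional quotient $\bigl(\int_0^R F(r)\,r^{n-1}\,dr\bigr)\big/\bigl(\int_0^R r^{n-1}\,dr\bigr)$, where $F$ is the radial profile of the spherical mean of $f$. Concretely, I would set $\bar f(y):=\int_{SO(\real^n)} f(\tau y)\,d\tau$ for $y\in\real^n$; since $SO(\real^n)$ acts transitively on each sphere about the origin, $\bar f$ is radial, so I may write $\bar f(y)=F(|y|)$. Throughout I normalize the Haar measure $d\tau$ to total mass $1$, and I use that $f\ge 0$ together with measurability of $(\tau,x)\mapsto f(\tau x)$ on $SO(\real^n)\times\real^n$, so that Tonelli's theorem is available without further comment.

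For the right-hand side, Tonelli lets me bring the $\tau$-integral inside the $x_1$-integral in the numerator; the inner average is $\int_{SO(\real^n)} f(\tau(x_1,0))\,d\tau=\bar f\bigl((x_1,0)\bigr)=F(|x_1|)$, so the numerator equals $\int_{|x_1|<R} F(|x_1|)\,|x_1|^{n-k}\,dx_1$. Passing to polar coordinates in $\real^k$ turns this into $\omega_{k-1}\int_0^R F(r)\,r^{n-k}\,r^{k-1}\,dr=\omega_{k-1}\int_0^R F(r)\,r^{n-1}\,dr$, the point being that the extrinsic weight $|x_1|^{n-k}$ combines with the intrinsic surface factor $r^{k-1}$ to reproduce exactly $r^{n-1}$. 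The same computation with $F\equiv 1$ converts the denominator into $\omega_{k-1}\int_0^R r^{n-1}\,dr$, and the factors $\omega_{k-1}$ cancel, leaving the quotient above.

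For the left-hand side, I would first note that for each fixed $\tau\in SO(\real^n)$ the substitution $x\mapsto\tau x$ preserves Lebesgue measure and the ball $\{|x|<R\}$, so $\int_{|x|<R} f(\tau x)\,dx=\int_{|x|<R} f(x)\,dx$; integrating this identity over the probability space $SO(\real^n)$ and swapping the order of integration gives $\int_{|x|<R} f(x)\,dx=\int_{|x|<R}\bar f(x)\,dx=\omega_{n-1}\int_0^R F(r)\,r^{n-1}\,dr$ by polar coordinates in $\real^n$, while $\int_{|x|<R}dx=\omega_{n-1}\int_0^R r^{n-1}\,dr$. Dividing reproduces the same quotient as on the right, which finishes the proof. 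I do not anticipate a genuine obstacle here: the only things that deserve a sentence are the two invocations of Tonelli (licensed by positivity of $f$) and the normalization of the Haar measure, and the real content is the elementary bookkeeping that rotational averaging makes both a full $n$-dimensional average over $B_R$ and a $k$-dimensional average with the weight $|x_1|^{n-k}$ collapse to the common radial average against $r^{n-1}\,dr$.
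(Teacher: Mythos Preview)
Your argument is correct. The paper does not actually include a proof of this lemma; it simply attributes the result to Stein and Str\"omberg \cite{SteinStromberg} and moves on, so there is no ``paper's own proof'' to compare against. Your reduction of both sides to the common radial quotient $\int_0^R F(r)\,r^{n-1}\,dr\big/\int_0^R r^{n-1}\,dr$ via the rotational average $\bar f(y)=\int_{SO(\real^n)} f(\tau y)\,d\tau$ is exactly the standard way to justify this identity, and your bookkeeping (Tonelli for the nonnegative integrand, polar coordinates in $\real^k$ and in $\real^n$, the cancellation of $\omega_{k-1}$ and $\omega_{n-1}$) is accurate. One small remark: when you say that $\bar f$ is radial ``since $SO(\real^n)$ acts transitively on each sphere,'' the transitivity is necessary but the real reason is the right-invariance of the normalized Haar measure, which gives $\bar f(\sigma y)=\int f(\tau\sigma y)\,d\tau=\int f(\tau' y)\,d\tau'=\bar f(y)$ for every $\sigma\in SO(\real^n)$; you might make that one step explicit.
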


\bigskip

Roughly speaking, it asserts that an integral mean over a ball in $\real^n$ can be transformed into an integral mean over a ball in $\real^k$ combined with all possible rotations in $\real^n$. In \cite{SteinStromberg} Lemma \ref{metodo.rotaciones} was used to obtain the following pointwise controls of the maximal function.

\bigskip

\begin{lemma}\label{lemma.acotaciones}
Let $k<n$ be natural numbers. With the notation of the previous Lemma \ref{metodo.rotaciones}, if $g$ is a function over $\real^n$ we write $g_{x_2}(x_1) := g(x_1,x_2) = g(x)$. Denoting by $M_m$ the maximal operator in $\real^m$, we have the following bounds:
\begin{eqnarray*}
a)\quad \quad M_n f(x) &\leq& \frac nk \int_{SO(n)} M_k\left[(f\circ \tau)_{\tau^{-1}(x)_2}\right](\tau^{-1}(x)_1)\,d\tau,\\
b)\quad \quad M_n f(x) &\leq& \int_{SO(n)} \mathcal M_k\left[(f\circ \tau)_{\tau^{-1}(x)_2}\right](\tau^{-1}(x)_1)\,d\tau,
\end{eqnarray*}
where $\mathcal M_k$ stands for the $k$-dimensional spherical maximal operator.
\end{lemma}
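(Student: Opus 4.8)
The plan is to insert the exact identity of Lemma~\ref{metodo.rotaciones} into the definition of the centered maximal operator and then estimate the resulting integral over the $k$-dimensional ball in two complementary ways, one producing $M_k$ and the other $\mathcal M_k$.

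First I would pass to origin-centered balls: by translation invariance $\fint_{B_R(x)}|f(y)|\,dy=\fint_{|z|<R}|f(x+z)|\,dz$, and applying Lemma~\ref{metodo.rotaciones} to the positive function $z\mapsto|f(x+z)|$ gives
\[
\fint_{B_R(x)}|f|=\frac{\int_{SO(n)}\int_{|z_1|<R}|f(x+\tau(z_1,0))|\,|z_1|^{n-k}\,dz_1\,d\tau}{\int_{|z_1|<R}|z_1|^{n-k}\,dz_1}.
\]
For fixed $\tau$ I put $g=f\circ\tau$ and split $\tau^{-1}x=(\tau^{-1}(x)_1,\tau^{-1}(x)_2)$, so that $f(x+\tau(z_1,0))=g_{\tau^{-1}(x)_2}(\tau^{-1}(x)_1+z_1)$ in the notation of the statement. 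A polar change of variables in $\real^k$ evaluates the denominator as $\int_{|z_1|<R}|z_1|^{n-k}\,dz_1=\omega_{k-1}R^n/n$, where $\omega_{k-1}=\sigma(S^{k-1})$.

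For a) I would use the crude bound $|z_1|^{n-k}\le R^{n-k}$ on the ball $|z_1|<R$, so the inner integral is at most $R^{n-k}\int_{B_R(\tau^{-1}(x)_1)}|g_{\tau^{-1}(x)_2}|\le R^{n-k}|B^k_R|\,M_k[g_{\tau^{-1}(x)_2}](\tau^{-1}(x)_1)=\frac{\omega_{k-1}}{k}R^n\,M_k[g_{\tau^{-1}(x)_2}](\tau^{-1}(x)_1)$. Dividing by the denominator yields $\fint_{B_R(x)}|f|\le\frac nk\int_{SO(n)}M_k[(f\circ\tau)_{\tau^{-1}(x)_2}](\tau^{-1}(x)_1)\,d\tau$, whose right-hand side is independent of $R$; taking the supremum over $R>0$ gives a). For b) I would instead fold $|z_1|^{n-k}$ into the radial weight: in polar coordinates the inner integral equals $\int_0^R r^{n-1}\big(\int_{S^{k-1}}|g_{\tau^{-1}(x)_2}(\tau^{-1}(x)_1+r\omega)|\,d\sigma(\omega)\big)\,dr\le\omega_{k-1}\,\mathcal M_k[g_{\tau^{-1}(x)_2}](\tau^{-1}(x)_1)\cdot\frac{R^n}{n}$, using the elementary bound $\int_{S^{k-1}}|h(y+r\omega)|\,d\sigma(\omega)\le\omega_{k-1}\,\mathcal M_kh(y)$. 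Dividing by $\omega_{k-1}R^n/n$ and taking the supremum over $R$ gives b).

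There is no real difficulty here; the argument is bookkeeping, and the only conceptual point is the dichotomy in the previous paragraph: absorbing $|z_1|^{n-k}$ as the blunt constant $R^{n-k}$ yields $M_k$ at the cost of the factor $n/k$, while absorbing it into $r^{k-1}\,dr$ to restore the full surface element $r^{n-1}\,dr$ yields $\mathcal M_k$ with no dimensional loss. The technicalities to note are that for a.e.\ value of the last $n-k$ coordinates the slice $g_{x_2}$ is locally integrable on $\real^k$ (Fubini), so $M_k$ and $\mathcal M_k$ apply to it; that the integrands are measurable in $\tau$; and that the supremum over $R$ may be pulled out of the $SO(n)$-integral because in both cases the bound obtained for $\fint_{B_R(x)}|f|$ is $R$-independent.
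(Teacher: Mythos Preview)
Your argument is correct and is precisely the standard Stein--Str\"omberg proof. Note that the paper does not give its own proof of this lemma: it states the result and attributes it to \cite{SteinStromberg}, so there is nothing to compare against beyond observing that your reconstruction matches the cited source.
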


\bigskip

We will also employ a technical result for radial weights.

\bigskip

\begin{lemma}\label{lemma.dificil}
Let $w(x)=w_0(|x|)$ be a radial weight over $\real^k$ so that for some $p\geq1$ we have $w\in A_p(\real^k)$. For each $\rho>0$ consider the weights $w_{\rho}(x)=w_0(\sqrt{\rho^2+|x|^2})$. Then one has $w_{\rho} \in A_p(\real^k)$, and moreover there exists a constant $C_k>0$ only depending on  $k$ so that for all $\rho\geq0$ one has $[w_{\rho}]_{A_p(\real^k)}\leq C_k \, [w]_{A_p(\real^k)}$. As a consequence, there exists a constant $\tilde C_k>0$ only depending on $k$ and on $w$ so that for all $\rho\geq0$ and $f\in L^p(w_\rho)$ one has
\[
\|M_kf\|_{L^p(w_{\rho})} \leq \tilde C_k \|f\|_{L^p(w_{\rho})}.
\]
\end{lemma}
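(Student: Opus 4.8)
The plan is to derive the weighted norm inequality from the bound on $A_p$–constants, and to prove the latter by slicing the average $\fint_B w_\rho$ along spheres centred at the origin. Throughout $C_k$ denotes a constant depending only on $k$, and I write $v_k(t)=t^{k-1}$, $v_k(I)=\int_I t^{k-1}\,dt$. Once we know
\[
[w_\rho]_{A_p(\real^k)}\le C_k\,[w]_{A_p(\real^k)}\qquad(\rho\ge0),
\]
the second assertion follows at once from the quantitative Muckenhoupt theorem on the \emph{fixed} space $\real^k$, $\|M_k\|_{L^p(u)\to L^p(u)}\le\Phi_{k,p}\bigl([u]_{A_p(\real^k)}\bigr)$, by taking $\tilde C_k=\Phi_{k,p}\bigl(C_k[w]_{A_p(\real^k)}\bigr)$. (Any bound $[w_\rho]_{A_p}\le F_k([w]_{A_p})$ suffices for this last implication, which is in fact all the argument below gives directly for $p>1$; the clean linear bound for $p>1$ can be recovered from the case $p=1$ by factoring $w$ into radial $A_1$ weights.)

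\emph{Slicing.} Fix a ball $B=B_R(x_0)\subset\real^k$, set $d=|x_0|$, let $[a,b]=[(d-R)_+,\,d+R]$ be the range of $|x|$ over $B$, and put $a'=\sqrt{\rho^2+a^2}$, $b'=\sqrt{\rho^2+b^2}$. For nonnegative radial $h=h_0(|\cdot|)$, slicing $B$ by the spheres $\{|x|=r\}$ and substituting $s=\sqrt{\rho^2+r^2}$ yields
\[
\int_B h_0\bigl(\sqrt{\rho^2+|x|^2}\bigr)\,dx=\int_a^b h_0\bigl(\sqrt{\rho^2+r^2}\bigr)\,\sigma_B(r)\,dr=\int_{a'}^{b'}h_0(s)\,\tilde\sigma(s)\,ds,
\]
where $\sigma_B(r)=\mathcal H^{k-1}(\{|x|=r\}\cap B)$ and $\tilde\sigma(s)=\sigma_B(\sqrt{s^2-\rho^2})\,s/\sqrt{s^2-\rho^2}$. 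Taking $h_0\equiv1$ gives $\int_{a'}^{b'}\tilde\sigma=|B|$; and since $x\mapsto\sqrt{\rho^2+|x|^2}$ pushes Lebesgue measure on $B$ forward to a measure with a.e. positive density on $[a',b']$, one also has $\essinf_B w_\rho=\essinf_{[a',b']}w_0$.

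\emph{The key pointwise estimate — the main obstacle.} The core of the proof is the bound, valid for every ball $B$ and every $\rho\ge0$ (here $k\ge2$ is used in an essential way),
\[
\tilde\sigma(s)\le C_k\,\frac{|B|}{v_k([a',b'])}\,s^{k-1}\qquad(s\in[a',b']).
\]
I would prove it starting from the spherical–cap estimate $\sigma_B(r)\approx_k r^{k-1}\min\!\bigl(\bigl(\tfrac{R^2-(r-d)^2}{rd}\bigr)^{(k-1)/2},1\bigr)$ on $[a,b]$. Writing $r=\sqrt{s^2-\rho^2}$ and factoring $\tilde\sigma(s)/s^{k-1}=\bigl(\sigma_B(r)/r^{k-1}\bigr)\,(r/s)^{k-2}$, one checks from the explicit formula that the first factor (the angular fraction) is, up to constants depending only on $k$, equal to its maximum over the central third of $[a,b]$, while the second factor is nondecreasing in $r$ and again comparable to its maximum there; comparing the resulting $\max_{[a',b']}\tilde\sigma/v_k$ with the $v_k$–average $\int_{a'}^{b'}\tilde\sigma\,/\,v_k([a',b'])=|B|/v_k([a',b'])$ then gives the displayed inequality. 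This is where I expect the real work to lie: one must keep all constants uniform in $\rho$ across the geometric regimes — when $a\gtrsim\rho$, $\tilde\sigma$ is comparable to the slice function of a genuine ball of $\real^k$ with radial range $[a',b']$, whereas when $a\ll\rho$ the change $r\mapsto r^2$ effectively halves the dimension, and $k\ge2$ enters precisely here (for $k=1$ the conclusion can fail: if $w_0(s)=|s-\rho|^{-\beta}$ with $\tfrac12\le\beta<1$ then $w_0(|\cdot|)\in A_1(\real^1)$ but $w_\rho(x)=|\sqrt{\rho^2+x^2}-\rho|^{-\beta}\sim|x|^{-2\beta}$ near $0$ is not; for the applications $k$ is a fixed dimension $\ge2$).

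\emph{Conclusion.} Applying the slicing identity and the key estimate with $h=w$ and with $h=w^{1-p'}$ gives $\fint_B w_\rho\le C_k\,\tfrac1{v_k([a',b'])}\int_{a'}^{b'}w_0\,v_k$ and $\fint_B w_\rho^{1-p'}\le C_k\,\tfrac1{v_k([a',b'])}\int_{a'}^{b'}w_0^{1-p'}\,v_k$, whence, with $I=[a',b']$,
\[
\bigl(\fint_B w_\rho\bigr)\bigl(\fint_B w_\rho^{1-p'}\bigr)^{p-1}\le C_k^{\,p}\,\bigl(\tfrac{1}{v_k(I)}{\textstyle\int_I} w_0\,v_k\bigr)\bigl(\tfrac{1}{v_k(I)}{\textstyle\int_I} w_0^{1-p'}\,v_k\bigr)^{p-1}
\]
(for $p=1$ use instead $\fint_B w_\rho\le C_k\,\tfrac1{v_k(I)}\int_I w_0\,v_k$ together with $\essinf_B w_\rho=\essinf_I w_0$). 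Since $w$ is a radial $A_p(\real^k)$ weight, Lemma~\ref{lemma.anillos} (the comparison $\tfrac1{K_k}\mathcal Au\le Mu\le2\mathcal Au$), combined with one–dimensional Muckenhoupt theory for $\tilde M_{v_k}$, bounds this ``annular'' $A_p$–quantity of $w$ by a quantity depending only on $k$ and $[w]_{A_p(\real^k)}$ (indeed, for $p=1$, by $\lesssim_k[w]_{A_1(\real^k)}$). Taking the supremum over all balls $B$ of $\real^k$ yields the required bound on $[w_\rho]_{A_p(\real^k)}$, and the reduction at the start completes the proof.
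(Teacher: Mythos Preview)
Your strategy differs from the paper's in one crucial respect: you invoke the annular characterization (Lemma~\ref{lemma.anillos}) only at the very end, after having bounded the averages of $w_\rho$ over \emph{arbitrary} balls $B_R(x_0)$. This forces you to slice off-centre balls by origin-centred spheres, producing the spherical-cap density $\sigma_B(r)$ and the transformed density $\tilde\sigma(s)$, and then to prove the pointwise bound $\tilde\sigma(s)\le C_k\,|B|\,s^{k-1}/v_k([a',b'])$. That estimate is where your argument stalls: what you offer is a sketch (``one checks from the explicit formula\dots'', ``this is where I expect the real work to lie'') together with a description of the regimes to be handled, but no actual proof. The geometry behind it is genuine---the bump $\sigma_B(r)/r^{k-1}$ and the Jacobian factor $(r/s)^{k-2}$ interact differently depending on whether $0\in B$, whether $\rho\lesssim a$ or $\rho\gg b$, etc.---and your outline does not make clear, for instance, why the $v_k$-average of $\tilde\sigma/s^{k-1}$ over $[a',b']$ should control its maximum when the ``central third'' of $[a,b]$ may carry only a tiny fraction of $v_k([a',b'])$.

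The paper sidesteps all of this by using Lemma~\ref{lemma.anillos} \emph{first} rather than last. After reducing to $p=1$ by radial factorization, one must only show $(w_\rho)_{A_1(\real^k)}\le C_k(w)_{A_1(\real^k)}$, i.e.\ control the one-dimensional averages $\frac{k}{b^k-a^k}\int_a^b w_0(\sqrt{\rho^2+t^2})\,t^{k-1}\,dt$. The same change of variables $s^2=\rho^2+t^2$ then produces not a spherical-cap density but the explicit Jacobian $(s^2-\rho^2)^{k/2-1}s$, and the entire ``key pointwise estimate'' collapses to the elementary inequality
\[
\bigl[(\rho^2+b^2)^{k/2}-(\rho^2+a^2)^{k/2}\bigr]\Bigl(\tfrac{s^2-\rho^2}{s^2}\Bigr)^{k/2-1}\le \tfrac{k}{2}\,(b^k-a^k),
\]
which follows in two lines from monotonicity in $s$ and the Mean Value Theorem applied to $t\mapsto(\rho^2+t)^{k/2}$. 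Your observation that $k\ge2$ is essential (with the neat counterexample for $k=1$) is correct and is exactly where the paper uses monotonicity of $((s^2-\rho^2)/s^2)^{k/2-1}$; but the much cleaner route is to pass to annuli before, not after, the change of variables.
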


\bigskip

Now we are in conditions to prove Lemma \ref{lemma.a1.siempre}.

\bigskip

\begin{proof}[Proof of Lemma \ref{lemma.a1.siempre}]
Assume that $w(x_1)=w_0(|x_1|)$ is an $A_1(\real^k)$ weight. In view of Lemma \ref{lemma.acotaciones} part $a)$ and Lemma \ref{lemma.dificil} if $n\geq k$ and $x\in\real^n$ we have
\begin{eqnarray*}%
M_n w(x)&\leq& \frac nk \int_{SO(\real^n)}M_k [w_{|\tau^{-1}(x)_2|}](\tau^{-1}(x)_1)\,d\tau \\&\leq& C_k  [w]_{A_1(\real^k)}^\ast \frac nk\int_{SO(\real^n)} w_{|\tau^{-1}(x)_2|}(\tau^{-1}(x)_1)\,d\tau \\&=& C_k  [w]_{A_1(\real^k)}^\ast \frac nk\ w(x).
\end{eqnarray*}

\bigskip

The bound for $(w)_{A_1}$ is immediate once we observe that for any $[a,b]\subset [0,\infty)$ one has
\begin{eqnarray*}
\frac{w_0v_n([a,b])}{v_n([a,b])} &=& \frac{n}{b^n-a^n}\int_{a}^b w_0(t)t^{n-1}\,dt \\&\leq&  b^{n-k}\ \frac{b^k-a^k}{b^n-a^n}\ \frac nk\ \frac k{b^k-a^k}\int_a^b w_0(t)t^{k-1}\,dt\\&\leq& \frac nk\ \frac{w_0v_k([a,b])}{v_k([a,b])}.
\end{eqnarray*}
The second inequality uses that $b^{n-k}(b^k-a^k) \le b^n-a^n$, whenever $0\le a\le b$. From the previous calculation one deduces that $\tilde M_{v_n} w_0 \leq n/k\ \tilde M_{v_k} w_0$, which implies the bound for $(w)_{A_1}^\star$.
\end{proof}

\bigskip

In order to prove Theorem \ref{teo.duoandi.vega} we will use Lemma \ref{lemma.acotaciones}, part $b)$, where the maximal spherical operator appears. We recall that if $n\geq 2$, for a suitable smooth function $f$ the maximal spherical operator is defined as
\[
\mathcal M_nf(x)=\sup_{r>0}\frac1{\omega_{n-1}}\int_{\mathbb S^{n-1}} |f(x+ry)|\,d\sigma_{n-1}(y).
\]
This operator is known to be bounded on $L^p(\real^n)$ if and only if $p>n/(n-1)$. E.M. Stein proved this in \cite{Stein} in the case that $n\geq 3$, and J. Bourgain in \cite{Bourgain4} for $n=2$. This allows to define the maximal spherical operator over functions in $L^p(\real^n)$ with $p>n/(n-1)$.

\bigskip

We say that a weight $w$ is in the class $W_p(\real^n)$ if $\mathcal M_n$ is bounded on $L^p(w)$. If $w$ is radial we have the following relation with the $A_p$ classes.

\bigskip

\begin{lemma}\label{lemma.wp}
Let $\mu$ be a radial measure over $\real^n$ with density $w(x)=w_0(|x|)$. If for certain $k$ one has $w\in A_p(\real^{k})$ then there exists $m\geq k$ so that $w\in W_p(\real^m)$. Moreover $\|\mathcal M_m\|_{L^p(\real^m,w)\rightarrow L^p(\real^m,w)}$ is controlled by $\|M_k\|_{L^p(\real^k,w)\rightarrow L^p(\real^k,w)}$.
\end{lemma}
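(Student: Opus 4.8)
The plan is to bootstrap from the $A_p(\real^k)$ hypothesis to a spherical estimate in some large dimension $m$ by using a comparison of the spherical maximal operator with the Hardy--Littlewood maximal operator \emph{after raising the dimension}. The key observation is that in $\real^m$ the surface measure $\sigma_{m-1}$ on the unit sphere, once we integrate a radial-in-$x_1$ configuration, becomes very close to a ball average in a lower-dimensional slice when $m$ is large, because the mass of $B_R \subset \real^{m-k}$ concentrates near the boundary sphere. Concretely, first I would invoke Lemma \ref{metodo.rotaciones}: averaging over $B_R \subset \real^m$ splits as an average over $SO(m)$ of a weighted ball average in $\real^k$ with weight $|x_1|^{m-k}$. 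The same rotation method applied to the sphere $\mathbb S^{m-1}$ gives an analogous splitting of the spherical average over $\mathbb S^{m-1}$ into a rotation average of surface integrals on lower-dimensional spheres. Comparing the two, for $m-k$ large the weight $|x_1|^{m-k}$ on $B_R \subset \real^k$ forces the mass toward $|x_1| = R$, so a ball average in $\real^m$ and a spherical average in $\real^m$ of a fixed function differ by a controlled factor; passing to the supremum over $r$, one gets $\mathcal M_m g(x) \le C\, M_m g(x)$ pointwise (or at least with a constant absorbing the discrepancy), uniformly once $m$ exceeds some threshold.

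Granting such a pointwise domination $\mathcal M_m \le C M_m$, the proof reduces to controlling $\|M_m\|_{L^p(\real^m,w) \to L^p(\real^m,w)}$ in terms of $\|M_k\|_{L^p(\real^k,w) \to L^p(\real^k,w)}$. For this I would use Lemma \ref{lemma.acotaciones} part $a)$ together with Lemma \ref{lemma.dificil}: the bound
\[
M_m f(x) \le \frac mk \int_{SO(m)} M_k\bigl[(f\circ\tau)_{\tau^{-1}(x)_2}\bigr](\tau^{-1}(x)_1)\, d\tau
\]
combined with Minkowski's integral inequality, Fubini, and the uniform weighted estimate $\|M_k h\|_{L^p(w_\rho)} \le \tilde C_k \|h\|_{L^p(w_\rho)}$ of Lemma \ref{lemma.dificil} (applied slice by slice with $\rho = |\tau^{-1}(x)_2|$), yields $\|M_m f\|_{L^p(\real^m,w)} \le (m/k)\, \tilde C_k\, \|f\|_{L^p(\real^m,w)}$. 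So in fact $w \in A_p(\real^m)$ with a constant growing at most linearly in $m$ — but $m$ itself is fixed (determined by the threshold needed for the spherical comparison), so this is a genuine bound depending only on $k$ and $w$, hence on $\|M_k\|_{L^p(\real^k,w)\to L^p(\real^k,w)}$ through $\tilde C_k$.

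The main obstacle I expect is making the pointwise (or near-pointwise) comparison $\mathcal M_m \le C M_m$ precise and uniform in the radius $r$ and the center $x$. One has to be careful that the concentration of $|x_1|^{m-k}\,dx_1$ near the boundary of $B_R$ is quantitative enough — roughly, $\fint_{B_R} \phi(|x_1|)\,|x_1|^{m-k}\,dx_1 \to \phi(R)$ as $m \to \infty$, which is the content already exploited in Lemma \ref{lemma.differentiation} — and that this transfers to an inequality between the $m$-dimensional ball average and the $m$-dimensional spherical average of an arbitrary (not just radial) $f$, after the $SO(m)$ averaging. A clean way around any loss of uniformity in $r$ is to note that it suffices to choose $m$ large enough that a single ratio (ball-mass to boundary-shell-mass) is bounded by, say, $2$; since that ratio depends only on $m$ and $k$ and not on $r$, once the threshold is crossed the comparison holds for all radii simultaneously. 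After that, the rest is the routine rotation-method plus Minkowski argument already used in the proof of Lemma \ref{lemma.a1.siempre}.
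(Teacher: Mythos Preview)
Your proposed pointwise domination $\mathcal M_m g \le C\, M_m g$ for general $g$ is false in every dimension $m$, and this is the fatal gap. Take $g=\chi_A$ with $A=\{1-\epsilon<|y|<1+\epsilon\}\subset\real^m$. Then $\mathcal M_m g(0)\ge 1$ (average over the unit sphere), while $M_m g(0)\approx m\epsilon$ (the best ball average picks up only a thin shell), so the ratio $\mathcal M_m g(0)/M_m g(0)\to\infty$ as $\epsilon\to 0$, for every fixed $m$. Equivalently, $\mathcal M_m\le C M_m$ would force $\mathcal M_m$ to be weak $(1,1)$, which it is not. The concentration phenomenon you invoke --- that $|x_1|^{m-k}\,dx_1$ pushes mass to the boundary --- yields the \emph{opposite} inequality: it says a ball average in $\real^m$ is close to the spherical average at the outer radius, hence $M_m f\lesssim \mathcal M_m f$, not the direction you need.

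The paper proceeds quite differently. It first uses the Reverse H\"older property to find $s<1$ with $w^{1/s}\in A_p(\real^k)$, hence $w^{1/s}\in A_p(\real^m)$ for all $m\ge k$ by factorization and Lemma~\ref{lemma.a1.siempre}. Then it applies Rubio de Francia's dyadic frequency decomposition $\mathcal M_m f\le M_m f+\sum_{j\ge 0}\sup_t|S_t^j f|$, where each piece satisfies an unweighted $L^p$ bound with decay $2^{j(1-m/p')}$ (from the oscillation of $\widehat{d\sigma}$) and a weighted bound $\|\sup_t|S_t^j f|\|_{L^p(w^{1/s})}\lesssim 2^j\|f\|_{L^p(w^{1/s})}$ (from $S_t^j f\lesssim 2^j M_m f$). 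Stein--Weiss interpolation with change of measure between $dx$ and $w^{1/s}\,dx$ gives $\|\sup_t|S_t^j f|\|_{L^p(w)}\lesssim 2^{[s+(1-s)(1-m/p')]j}\|f\|_{L^p(w)}$, and choosing $m>p'/(1-s)$ makes the exponent negative so the sum in $j$ converges. Your second step (bounding $M_m$ on $L^p(\real^m,w)$ via rotations and Lemma~\ref{lemma.dificil}) is fine and is indeed part of the picture, but the spherical operator requires this Fourier-analytic detour through Rubio de Francia and Reverse H\"older, not a direct pointwise comparison.
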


\bigskip

Both, Lemmas \ref{lemma.dificil} and \ref{lemma.wp} are proved below. Now we are ready to prove Theorem \ref{teo.duoandi.vega}.

\bigskip

\begin{proof}[Proof of Theorem \ref{teo.duoandi.vega}]
Since $w\in A_p(\real^k)$, by Lemma \ref{lemma.dificil}, for all $a\geq 0$ one has $w_a \in A_p(\real^k)$. Furthermore by Lemma \ref{lemma.wp} there exist $m\geq k$ so that for all $a\geq 0$ one has $w_{a}\in W_p(\real^m)$. Moreover $\|\mathcal M_m\|_{L^p(\real^m,w_a)\rightarrow L^p(\real^m,w_a)}$ is controlled by $\|M_k\|_{L^p(\real^k,w)\rightarrow L^p(\real^k,w)}$. This means that there exists $C_m>0$ so that for all $a\geq 0$ and $g\in L^p(w_a)$ one has
\[
\|\mathcal M_m g\|_{L^p(\real^m,w_{a})} \leq C_m \|f\|_{L^p(\real^m,w_{a})}.
\]
Given $f\in L^p(\real^n,w)$ with $n>m$, by Lemma \ref{lemma.acotaciones}, part $b)$ and Minkowski inequality we have
\begin{equation*}
\|M_nf\|_{L^p(\real^n,w)} \leq \int_{SO(n)} \left(\int_{\real^n} \left|\mathcal M_m[(f\circ \tau)_{\tau^{-1}(x)_2}](\tau^{-1}(x)_1)\right|^p w(x)\,dx\right)^{1/p}\,d\tau.
\end{equation*} 
Applying obvious changes of integration variables and Lemma \ref{lemma.dificil}, the previous is bounded by
\begin{eqnarray*}
\int_{SO(n)}\!\!\!\!\!&&\!\!\!\!\!\!\!\!\!\!\!\! \left(\int_{\real^n} \left|\mathcal M_m[(f\circ \tau)_{y_2}](y_1)\right|^p w(y)\,dy\right)^{1/p}\,d\tau \\&=&
 \int_{SO(n)} \left(\int_{\real^{n-k}}\int_{\real^k} \left|\mathcal M_m[(f\circ \tau)_{y_2}](y_1)\right|^p w_{|y_2|}(y_1)\,dy_1\,dy_2\right)^{1/p}\,d\tau \\ 
&\leq& \int_{SO(n)} \left(\int_{\real^{n-k}}C_m\int_{\real^k} \left|(f\circ \tau)_{y_2}(y_1)\right|^p w_{|y_2|}(y_1)\,dy_1\,dy_2\right)^{1/p}\,d\tau \\ &=& C_m \|f\|_{L^p( w)}.
\end{eqnarray*}
\end{proof}

\bigskip

We finish justifying Lemmas \ref{lemma.dificil} and \ref{lemma.wp}.

\bigskip

\begin{proof}[Proof of Lemma \ref{lemma.dificil}]
If $w\in A_p(\real^k)$ there exist $u$, $v\in A_1(\real^k)$ so that $w=uv^{1-p}$ and $[w]_{A_p(\real^k)}\leq [u]_{A_1(\real^k)}[v]_{A_1(\real^k)}^{p-1}$. Moreover if $w$ is radial, $u$ and $v$ can be chosen to be radial by their construction (see \cite{CuervaRubiodeFrancia}). Therefore it is enough to prove the result in the case $p=1$. 

\bigskip

Assuming $w\in A_1 (\real^k)$, we are going to show that there exist a constant $C>0$, so that for all $x\in\real^k$ and $\rho\geq 0$ one has $Mw_{\rho}(x)\leq C w_\rho(x)$. As we observed after Lemma \ref{lemma.anillos}, if $u(x)=u_0(x)$ is a radial and locally integrable function we have $Mu(x)\leq Cu(x)$ a.e. if and only if $\tilde M_{v_k} u_0(|x|)\leq C'u_0(|x|)$ a.e. By hypothesis this last condition is true for $w_0$. Now we check it for $w_0(\sqrt{\rho^2+(\,\cdot\,)^2})$. We take $0\leq a\leq |x|\leq b$. With the change of variables $s^2=\rho^2+t^2$ we obtain
\begin{eqnarray*}
\frac{k}{b^k-a^k}\int_a^b &&\!\!\!\!\!\!\!\!\!\!\!\!\!\!\!\!w_0(\sqrt{\rho^2+t^2})\,t^{k-1}\,dt\\&=& \frac{k}{b^k-a^k}\int_{\sqrt{\rho^2+a^2}}^{\sqrt{\rho^2+b^2}} w_0(s)\,(s^2-\rho^2)^{k/2-1}s\,ds\\&\leq& \frac{k^2/2}{(\rho^2+b^2)^{k/2}-(\rho^2+a^2)^{k/2}}\int_{\sqrt{\rho^2+a^2}}^{\sqrt{\rho^2+b^2}} w_0(s)\,s^{k-1}\,ds\\ &\leq& \frac k2\ \tilde M_{v_k} w_0(\sqrt{\rho^2+|x|^2}) \leq \frac k2\,(w)_{A_1(\real^k)}\ w_0(\sqrt{\rho^2+|x|^2}).
\end{eqnarray*}
The only step that is not immediate is the first inequality. Clearly it would follow from
\[
\left[(\rho^2+b^2)^{k/2}-(\rho^2+a^2)^{k/2}\right] \left(\frac{s^2-\rho^2}{s^2}\right)^{k/2-1} \leq \frac k2 (b^k-a^k).
\]
Observe that the left hand side of this inequality is increasing in $s$ for $s\geq \rho$. Thus, it is enough to check the case $s^2=\rho^2+b^2$, that is
\begin{equation}\label{casolim}
\left[(\rho^2+b^2)^{k/2}-(\rho^2+a^2)^{k/2}\right] \left(\frac{b^2}{\rho^2+b^2}\right)^{k/2-1} \leq \frac k2 (b^k-a^k).
\end{equation}
Applying the Mean Value Theorem to the function $\phi(t)=(\rho^2+t)^{n/2}$ yields
\[
(\rho^2+b^2)^{k/2}-(\rho^2+a^2)^{k/2} \leq \frac k2 (\rho^2+b^2)^{k/2-1}(b^2-a^2).
\]
This, together with the observation that $b^{k-2}(b^2-a^2)\leq b^k-a^k$, proves (\ref{casolim}).
\end{proof}

\bigskip

\begin{proof}[Proof of Lemma \ref{lemma.wp}]
We assume $w\in A_p(\real^k)$. By the Reverse H\"older property there exists $s<1$ so that $w^{1/s}$ is also an $A_p(\real^k)$ weight. Observe that $w^{1/s}\in A_p(\real^m)$ for all $m\geq k$. This is an easy consequence of the factorization and Lemma \ref{lemma.a1.siempre}.

\bigskip

Let us use the notation $\mathcal M_mf(x)=\sup_{t>0} S_t f(x)$, where
\[
S_tf(x)=\frac1{\omega_{m-1}}\int_{\mathbb S^{m-1}} |f(x+ty)|\,d\sigma_{m-1}(y).
\]

\bigskip

Following J.L. Rubio de Francia in \cite{RubiodeFrancia} we perform a dyadic decomposition of the spherical maximal function. Let $\psi:[0,\infty)\rightarrow[0,\infty)$ be a smooth function supported in $[1/2,2]$, so that for all $r>0$
\[
\sum_{j=-\infty}^\infty \psi(2^{-j} r)= 1.
\]
Let $\phi(r)=\sum_{j=-\infty}^{-1} \psi(2^{-j}r)$. We can define the operators $S_t^j$ and $B_t$ by $(S_t^j f)^\wedge(\xi)=(S_t f)^\wedge (\xi) \psi(2^{-j}|\xi|)$ and $(B_t f)^\wedge (\xi)=(S_tf)^\wedge \phi(2^{-j}|\xi|)$. It is obvious that
\begin{equation}\label{acotacion.suma}
\mathcal M_mf(x) = \sup_{t>0}\left|\sum_{j=0}^\infty B_tf(x)+S_t^j f(x)\right| \leq M_mf(x)+\sum_{j=0}^\infty \sup_{t>0} |S_t^jf(x)|.
\end{equation}
It is known that if $p<m/(m-1)$ then 
\[ 
\|\sup_{t>0}\left|S_t^jf\right|\|_{L^p(\real^m,dx)} \leq 2^{j(1-m/p')}\|f\|_{L^p(\real^m,dx)},
\]
and that $S_t^jf(x)\leq C2^{j} M_mf(x)$ (see \cite{RubiodeFrancia} and \cite{DuoandiVega}). Then since $w^{1/s}\in A_p(\real^m)$, one has
\[
\|\sup_{t>0}\left|S_t^jf\right|\|_{L^p(w^{1/s})} \leq C 2^{j}\|f\|_{L^p(w^{1/s})}.
\]
By interpolation with change of measure (see \cite{SteinWeiss}), one has
\[
\|\sup_{t>0}\left|S_t^jf\right|\|_{L^p(\real^m,w)} \leq C 2^{sj+(1-s)j(1-m/p')}\|f\|_{L^p(\real^m,w)}.
\]
If $m>p'/(1-s)$ the exponent in this bound is negative and then we can sum in $j$ to obtain
\begin{eqnarray*}
\|\mathcal M_mf\|_{L^p(\real^m,w)} &\leq& \|M_mf\|_{L^p(\real^m,w)} + \sum_{j=0}^\infty \Bigl\|\sup_{t>0}\bigl|S_t^jf\bigr|\Bigr\|_{L^p(\real^m,w)}\\ &\leq& C \|f\|_{L^p(\real^m,w)} + C \sum_{j=0}^\infty 2^{[s+(1-s)(1-m/p']j} \|f\|_{L^p(\real^m,w)}\\ &\leq& C \|f\|_{L^p(\real^m,w)}.
\end{eqnarray*}
Here $C$ may depend on $m$, $p$ and $w$ but is independent of $f$. $C$ is indeed  controlled by the operator norm of $M_m$ in $L^p(\real^m,w)$.
\end{proof}

\bigskip

\section{Kakeya maximal operator}\label{seccion.kakeya}

\bigskip

Fixed $N>0$, we denote by $\mathcal R_N$ the family of all parallelepipeds in $\real^n$ with edge lengths $h\times h\times \cdots\times h\times Nh$, where $h>0$ is arbitrary. The Kakeya maximal operator of eccentricity $N$ is defined as
\[
\mathcal K_Nf(x)=\sup_{x\in R\in \mathcal R_N}\frac1R\int_R |f(y)|\,dy.
\]
It is easy to prove that $\mathcal K_Nf(x)\leq N^{(n-1)}Mf(x)$ where $Mf$ is here the usual maximal function over all rotated cubes. One just has to replace $R\in\mathcal R_N$ by the smallest cube that contains it. Then $\mathcal K_N$ is weakly bounded on $L^1(\real^n)$ with a constant growing with $N$ at most at the rate $N^{n-1}$. By interpolation with the $L^\infty$ case the operator norm on $L^p(\real^n)$ grows at most like $N^{(n-1)/p}$ for $1<p<\infty$. However, it is conjectured that for $p=n$ it grows no faster than $C_\eps N^{\eps}$ for each $\eps>0$. A. C\'ordoba proved in \cite{Cordoba} that the conjecture is true in the case $n=2$. In higher dimensions A. Carbery, E. Hern\'andez and the second author showed in \cite{CarberyHernandezSoria} that the conjecture holds when restricting the action of $\mathcal K_N$ to radial functions. Alternative proofs and extensions are due to J. Duoandikoetxea, V. Naibo and O. Oruetxebarria  \cite{DuoandikoetxeaNaiboOruetxebarria} and J. Duoandikoetxea, A. Moyua and O. Oruetxebarria \cite{DuoMoyuaOruetxebarria}.

\bigskip

In this last three papers the result is obtained as a corollary of boundedness results for the universal maximal operator. This is defined as
\[
\mathcal Kf(x)=\sup_{u\in\mathbb S^{n-1}} \sup_{a\leq0 \leq b} \frac1{b-a}\int_{a}^b |f(x+su)|\,ds.
\]
$\mathcal K$ is related to the Kakeya maximal operator in the sense that it can be regarded as its extremal case, where the eccentricity $N$ is infinity and rectangles become segments. Moreover $\mathcal K$ majorizes all the $\mathcal K_N$ but turns out to be unbounded on every $L^p$, except for $p=\infty$ (see \cite{Guzman}). In spite of this, \cite{CarberyHernandezSoria} established that $\mathcal K:L_{\rad}^{n,1}(\real^n)\rightarrow L_{\rad}^{n,\infty}(\real^n)$.

\bigskip

The basic idea to give an alternative proof of this last bound in \cite{DuoandikoetxeaNaiboOruetxebarria} is that for $f=\chi_A$, the characteristic function of a radial set, we have $\mathcal Kf(x)\leq C_n\ (\mathcal Af(x))^{1/n}$. This was further refined in \cite{DuoMoyuaOruetxebarria} to obtain that for a $f$ radial $\mathcal Kf(x)\leq C_q (\tilde M_{v_2} f_0^q(|x|)^{1/q}$ for any $q>2$, and for $q\geq 2$ if $f$ is the characteristic function of a radial set. These last constants $C_q$ are independent of the dimension, although the weighted inequalities obtained from them were not. Here we prove

\bigskip

\begin{lemma}\label{lemma.kakeya.anillos}
Let $E$ be a radial subset of $\real^n$ and $f=\chi_E$. Then for all $k\geq 2$ one has the pointwise inequality
\[
\mathcal Kf(x) \leq 2( \tilde M_{v_k}f_0(|x|))^{1/k}.
\]
The constant 2 in this inequality is sharp.
\end{lemma}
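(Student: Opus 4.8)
The plan is to reduce the pointwise estimate to a one–dimensional inequality and then to one convexity fact about the profile $t\mapsto(d^2+t^2)^{k/2}$.

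\medskip

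\textbf{Linearization.} Fix $x\in\real^n$, $u\in\mathbb S^{n-1}$ and $a\le0\le b$. Since $E$ is radial, $f(x+su)=\chi_{E_0}(|x+su|)$ with $f_0=\chi_{E_0}$, and $|x+su|^2=(s+\langle x,u\rangle)^2+d^2$ where $d^2:=|x|^2-\langle x,u\rangle^2\in[0,|x|^2]$. Setting $t=s+\langle x,u\rangle$, $t_0=\langle x,u\rangle$ and $[t_1,t_2]=[a+t_0,b+t_0]$ one gets $t_0\in[t_1,t_2]$, $d^2+t_0^2=|x|^2=:r^2$, and the average of $f$ over the segment equals $\frac1{t_2-t_1}\int_{t_1}^{t_2}\chi_{E_0}(\sqrt{d^2+t^2})\,dt$; replacing $t$ by $-t$ we may assume $t_0\ge0$. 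Thus it suffices to bound every such average by $2\,(\tilde M_{v_k}\chi_{E_0}(r))^{1/k}$ and then take the supremum over $u,a,b$.

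\medskip

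\textbf{The key inequality.} Fix $d\ge0$, $k\ge2$, and set $F(t)=(d^2+t^2)^{k/2}$. The claim is that for every $a\ge0$ the map $c\mapsto (F(c)-F(a))/(c-a)^k$ is nonincreasing on $(a,\infty)$; equivalently, writing $c=a+m(b-a)$ with $a\le b$ and $m\in[0,1]$,
\[
F(c)-F(a)\ \ge\ m^k\bigl(F(b)-F(a)\bigr).
\]
Differentiating, this amounts to $(c-a)F'(c)\le k(F(c)-F(a))$, and since $F'(c)=kc(d^2+c^2)^{(k-2)/2}$ this rearranges to
\[
(d^2+c^2)^{(k-2)/2}(d^2+ac)\ \ge\ (d^2+a^2)^{k/2},
\]
which is immediate, because $c\ge a\ge0$ forces $d^2+c^2\ge d^2+a^2$ and $d^2+ac\ge d^2+a^2$, while $(k-2)/2\ge0$.

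\medskip

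\textbf{Two cases.} Let $g(t)=t(d^2+t^2)^{(k-2)/2}$, which is nondecreasing on $[0,\infty)$ and satisfies $\int_0^Tg=\frac1k[(d^2+T^2)^{k/2}-d^k]$; under $\rho=\sqrt{d^2+t^2}$ one has $\rho^{k-1}\,d\rho=g(t)\,dt$, so an average over a sub-arc becomes a $\tilde M_{v_k}$–admissible average. Put $A=\{t\in[t_1,t_2]:\sqrt{d^2+t^2}\in E_0\}$, so the segment average is $m=|A|/(t_2-t_1)$. If $t_1\ge0$, then $[\sqrt{d^2+t_1^2},\sqrt{d^2+t_2^2}]$ contains $r$ (as $t_1\le t_0\le t_2$), hence is admissible in the definition of $\tilde M_{v_k}$; since $g$ is nondecreasing the bathtub estimate gives $\int_Ag\ge\int_{t_1}^{t_1+|A|}g$, so
\[
\tilde M_{v_k}\chi_{E_0}(r)\ \ge\ \frac{\int_Ag}{\int_{t_1}^{t_2}g}\ \ge\ \frac{F(t_1+|A|)-F(t_1)}{F(t_2)-F(t_1)}\ \ge\ m^k
\]
by the key inequality (with $a=t_1$, $b=t_2$, $c=t_1+|A|$), hence the segment average $m\le(\tilde M_{v_k}\chi_{E_0}(r))^{1/k}$. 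If instead $t_1<0\le t_2$, set $\ell=\max(-t_1,t_2)$; then $[d,\sqrt{d^2+\ell^2}]\ni r$ is admissible, and folding $A$ about $t=0$ gives $\tilde A=\{|t|:t\in A\}\subseteq[0,\ell]$ with $|A|\le2|\tilde A|$ and $\int_{\{t\in[0,\ell]:\sqrt{d^2+t^2}\in E_0\}}g\ge\int_{\tilde A}g\ge\int_0^{|\tilde A|}g$. Writing $m'=|\tilde A|/\ell$, the key inequality (with $a=0$, $b=\ell$, $c=m'\ell$) yields $\tilde M_{v_k}\chi_{E_0}(r)\ge(m')^k$, while the segment average is $|A|/(t_2-t_1)\le 2|\tilde A|/\ell=2m'$, since $t_2-t_1=(-t_1)+t_2\ge\ell$. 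In either case the segment average is at most $2(\tilde M_{v_k}\chi_{E_0}(r))^{1/k}$, which proves the inequality.

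\medskip

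\textbf{Sharpness, and the main obstacle.} For sharpness take $E=B_R(0)$. For $|x|=r>R$ every line through $x$ meeting $B_R$ carries a chord of length at most $2R$ and a segment from $x$ to $B_R$ of length at least $r-R$, both bounds attained simultaneously only by the diameter through the origin, so $\mathcal Kf(x)=\tfrac{2R}{R+r}$; a direct computation gives $\tilde M_{v_k}\chi_{[0,R]}(r)=(R/r)^k$ for $r>R$, whence $\mathcal Kf(x)/(\tilde M_{v_k}f_0(|x|))^{1/k}=\tfrac{2r}{R+r}\to2$ as $r\to\infty$, so no smaller constant works. The delicate point of the proof is getting the constant to be exactly $2$ in the straddling case: one must fold the segment about the point of its line nearest the origin (which costs only a factor $2$ in Lebesgue measure) and pair it with the annulus $[d,\sqrt{d^2+\ell^2}]$ that exactly covers the folded radii; in the non-straddling case a loose comparison annulus fails for short segments far from the origin, which forces the use of the tight annulus $[\sqrt{d^2+t_1^2},\sqrt{d^2+t_2^2}]$, and it is exactly the convexity inequality above that makes that choice succeed.
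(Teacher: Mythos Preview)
Your proof is correct and follows essentially the same strategy as the paper: reduce to a one--dimensional statement along the line, push the ``hit set'' toward the foot of the perpendicular by a bathtub/rearrangement step, and finish with a convexity-type inequality for $(d^2+t^2)^{k/2}$. The two--case split ($t_1\ge0$ versus $t_1<0$) matches exactly the paper's reduction to the case where the closest point of the segment to the origin is an endpoint, and your folding in the second case is precisely their symmetrization step that produces the factor $2$.

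The main organizational difference is that you isolate a single clean monotonicity lemma---that $c\mapsto (F(c)-F(a))/(c-a)^k$ is nonincreasing for $F(t)=(d^2+t^2)^{k/2}$---and use it uniformly in both cases. The paper instead first proves the $k=2$ case of the comparison by the cosine law and then bootstraps to $k\ge2$ by a separate monotonicity argument in $k$; your formulation absorbs both steps at once and makes the role of $k\ge2$ (via $(k-2)/2\ge0$) transparent. Your sharpness example ($E=B_R(0)$, $r\to\infty$) is different from the paper's (thin annulus near the foot of the perpendicular), but both give the limiting ratio $2$; the verbal justification you give for $\mathcal K\chi_{B_R}(x)=\tfrac{2R}{R+r}$ is a bit loose (``segment from $x$ to $B_R$ of length at least $r-R$'' doesn't by itself yield the denominator $R+r$), though the value is correct and is easily checked by optimizing $\tfrac{2\sqrt{R^2-d^2}}{\sqrt{r^2-d^2}+\sqrt{R^2-d^2}}$ over $d$.
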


\bigskip

As a consequence, via Proposition \ref{prop.a1.decreciente}, we obtain

\bigskip

\begin{teo}\label{teorema.kakeya.radial} Let $f$ be a radial function over $\real^n$, with $n\geq 2$, and let $w$ be a radial weight in $A_1(\real^n)$, then
\[
\|\mathcal Kf\|_{L^{n,\infty}(\real^n,w)}^\ast\leq \frac{2n}{n-1}\ [2(w)_{A_1(\real^n)}]^{1/n}\ \|f\|_{L^{n,1}(\real^n,w)}^\ast.
\]
\end{teo}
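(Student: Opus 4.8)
The idea is to combine the pointwise Kakeya--annuli bound of Lemma \ref{lemma.kakeya.anillos} with the weighted weak type $(1,1)$ estimate for the ring operator $\mathcal A$ supplied by Proposition \ref{prop.a1.decreciente}, and to bridge from characteristic functions of radial sets to arbitrary radial functions by a layer-cake argument. First I would reduce to $f=f_0(|\cdot|)\ge 0$ (since $\mathcal Kf=\mathcal K|f|$) and assume $\|f\|^\ast_{L^{n,1}(\real^n,w)}<\infty$, so that everything in sight is finite. For $t>0$ set $E_t=\{x\in\real^n:f_0(|x|)>t\}$; these are radial sets with $w(E_t)=d^w_f(t)$, the distribution function of $f$ with respect to $w$. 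Each of the averages $\frac1{b-a}\int_a^b|f(x+su)|\,ds$ defining $\mathcal K$ is a positive linear functional of $|f|$, so writing $f_0(|y|)=\int_0^\infty\chi_{E_t}(y)\,dt$ and applying Tonelli inside each average yields the sublinearity bound
\[
\mathcal Kf(x)\le\int_0^\infty\mathcal K\chi_{E_t}(x)\,dt .
\]

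Now I bring in the two key inputs, applied to the radial sets $E_t$. By Lemma \ref{lemma.kakeya.anillos} with $k=n$ (legitimate since $n\ge 2$), together with the identity $\tilde M_{v_n}u_0(|x|)=\mathcal Au(x)$ for radial $u$, one gets $\mathcal K\chi_{E_t}(x)\le 2\,(\mathcal A\chi_{E_t}(x))^{1/n}$ for every $t$. On the other hand, since $w$ is radial and lies in $A_1(\real^n)$ one has $(w)_{A_1(\real^n)}<\infty$ (e.g.\ by Lemma \ref{lemma.anillos}), so the computation in the proof of Proposition \ref{prop.a1.decreciente} establishing $\ref{conda})\Rightarrow\ref{condb})$, read at the single dimension $n$, gives
\[
w\bigl(\{x\in\real^n:\mathcal A\chi_{E_t}(x)>\mu\}\bigr)\le\frac{2(w)_{A_1(\real^n)}}{\mu}\,w(E_t),\qquad\mu>0 ,
\]
that is $\|\mathcal A\chi_{E_t}\|^\ast_{L^{1,\infty}(\real^n,w)}\le 2(w)_{A_1(\real^n)}\,w(E_t)$. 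Since $\|h^{1/n}\|^\ast_{L^{n,\infty}(w)}=\bigl(\|h\|^\ast_{L^{1,\infty}(w)}\bigr)^{1/n}$ for $h\ge 0$, combining the last two facts gives the restricted bound
\[
\|\mathcal K\chi_{E_t}\|^\ast_{L^{n,\infty}(\real^n,w)}\le 2\,[\,2(w)_{A_1(\real^n)}\,]^{1/n}\,w(E_t)^{1/n} .
\]

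Finally I would integrate this over $t$. For $n\ge 2$ the quasi-norm $\|\cdot\|^\ast_{L^{n,\infty}(\real^n,w)}$ is dominated by the genuine norm $g\mapsto\sup\{\,w(F)^{1/n-1}\int_F|g|\,w:\ 0<w(F)<\infty\,\}$ and in turn dominates $\tfrac{n-1}{n}$ times it; since that norm is subadditive (also for integrals of nonnegative functions) one obtains the continuous quasi-triangle inequality $\|\int_0^\infty g_t\,dt\|^\ast_{L^{n,\infty}(w)}\le\tfrac{n}{n-1}\int_0^\infty\|g_t\|^\ast_{L^{n,\infty}(w)}\,dt$. Feeding in $g_t=\mathcal K\chi_{E_t}$ and the previous display,
\[
\|\mathcal Kf\|^\ast_{L^{n,\infty}(\real^n,w)}\le\frac{n}{n-1}\int_0^\infty\|\mathcal K\chi_{E_t}\|^\ast_{L^{n,\infty}(\real^n,w)}\,dt\le\frac{2n}{n-1}\,[\,2(w)_{A_1(\real^n)}\,]^{1/n}\int_0^\infty w(E_t)^{1/n}\,dt ,
\]
and $\int_0^\infty w(E_t)^{1/n}\,dt=\int_0^\infty d^w_f(t)^{1/n}\,dt=\|f\|^\ast_{L^{n,1}(\real^n,w)}$, which is the asserted inequality. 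All the real content is in Lemma \ref{lemma.kakeya.anillos} and Proposition \ref{prop.a1.decreciente}; the places where care is needed are the justification of the layer-cake inequality $\mathcal Kf\le\int_0^\infty\mathcal K\chi_{E_t}\,dt$ (Tonelli inside each average) and, above all, the bookkeeping of the Lorentz-space normalizations together with the sharp constant $n/(n-1)$ in the quasi-triangle inequality for $L^{n,\infty}$, so that the constant comes out exactly as $\tfrac{2n}{n-1}[2(w)_{A_1(\real^n)}]^{1/n}$ and not merely comparable to it.
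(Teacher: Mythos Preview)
Your proof is correct and follows essentially the same route as the paper's: both obtain the restricted estimate $\|\mathcal K\chi_E\|^\ast_{L^{n,\infty}(w)}\le 2[2(w)_{A_1(\real^n)}]^{1/n}w(E)^{1/n}$ for radial sets by combining Lemma~\ref{lemma.kakeya.anillos} (case $k=n$) with the weak $(1,1)$ bound for $\mathcal A$ from the proof of Proposition~\ref{prop.a1.decreciente}, and then pass to general $f$ via the $n/(n-1)$ comparison between the $L^{n,\infty}$ quasi-norm and its subadditive dual norm. The only cosmetic difference is that the paper sums over simple functions $f=\sum c_j\chi_{E_j}$ (after a density reduction) whereas you integrate the continuous layer-cake $f=\int_0^\infty\chi_{E_t}\,dt$; both lead to the same identity $\sum c_jw(E_j)^{1/n}=\int_0^\infty d^w_f(t)^{1/n}\,dt=\|f\|^\ast_{L^{n,1}(w)}$ and the same constant.
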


\bigskip

Observe that in view of Lemma \ref{lemma.a1.siempre} the previous implies a bound that is uniform in dimension. As a consequence one also has such a bound for the Kakeya maximal operator $\mathcal K_N$. We remark that the only original results claimed in this section are the sharp bound in Lemma \ref{lemma.kakeya.anillos} and the uniformity in the bound of Theorem \ref{teorema.kakeya.radial}.

\bigskip

Assuming Lemma \ref{lemma.kakeya.anillos} for the moment, we provide a proof of the above theorem.

\bigskip

\begin{proof}[Proof of Theorem \ref{teorema.kakeya.radial}]
By density, we just need to prove the result for a simple function of the form
\[
f(x)=\sum_{j=1}^J c_j\chi_{E_j}(x),
\]
where $E_1\supset\ldots\supset E_J$ are radial sets and $c_1,\ldots,c_J$ are positive reals.
If $E$ is a radial set, by Lemma \ref{lemma.kakeya.anillos} for $k=n$ and following the argument in the proof of Proposition \ref{prop.a1.decreciente} one has
\begin{eqnarray*}
w(\{x\in\real^n: \mathcal K\chi_E(x)>\lambda\}) &\leq& w(\{x\in\real^n: 2\mathcal A\chi_E(x)^{1/n}>\lambda\})\\ &\leq& \left(\frac2\lambda\right)^n\ 2(w)_{A_1(\real^n)}\ w(E).
\end{eqnarray*}
Hence $\|\mathcal K\chi_E\|_{L^{n,\infty}(\real^n,w)}^\ast \leq 2\,[2(w)_{A_1(\real^n)} w(E)]^{1/n}$. For a general $f$ we use the standard procedure:
\begin{eqnarray*}
\|Kf\|_{L^{n,\infty}(\real^n,w)}^\ast &\leq& \|Kf\|_{L^{n,\infty}(\real^n,w)} \leq \sum_{j=1}^J c_j\|K\chi_{E_j}\|_{L^{n,\infty}(\real^n,w)} \\&\leq& \frac{n}{n-1} \sum_{j=1}^Jc_j\|K\chi_{E_j}\|_{L^{n,\infty}(\real^n,w)}^\ast \\&\leq& \frac{2n}{n-1} [2(w)_{A_1(\real^n)}]^{1/n} \sum_{j=1}^Jc_jw(E_j)^{1/n}\\&=&\frac{2n}{n-1}[2(w)_{A_1(\real^n)}]^{1/n}\|f\|_{L^{n,1}(\real^n,w)}^\ast.
\end{eqnarray*}
\end{proof}

\bigskip

\begin{proof}[Proof of Lemma \ref{lemma.kakeya.anillos}]
We need some notation. Given $w,z\in\real^n$ we denote by $S_{w,z}$ the segment whose extremal points are $w,z$. We may assume that $|w|\leq|z|$ and will call $y$ to the point in $S_{w,z}$ which is closest to the origin. Consider a radial set $A\subset \real^n$, we define its radial projection over $S_{z,w}$ as $A_0=\{|y|\leq t\leq |z|: \exists x\in A \mbox{ with } |x|=t\}$. Denoting by $|E|_k$ the $k$-dimensional Hausdorff measure of a set $E$ it is enough to prove that one has
\[
\frac{|S_{w,z}\cap A|_1}{|z-w|} \leq 2 \left(\frac{k}{|z|^k-|y|^k}\int_{|y|}^{|z|}\chi_{A_0}(s)s^{k-1}\,ds\right)^{1/k}.
\]

\bigskip

We proceed in several steps, first we show how to reduce to the case $y=w$. Define $z'$ as the point aligned with $w$ and $z$ so that $|z'-y|=|z-y|$. Note that 
\[
\frac{|S_{w,z}\cap A|_1}{|z-w|} \leq \frac{|S_{z',z}\cap A|_1}{|z-y|} = 2\ \frac{|S_{y,z}\cap A|_1}{|z-y|}.
\]

\bigskip

For the second step we assume $y=w$. We call $L:=|z-y|$ and $\ell:=|S_{y,z}\cap A|_1$. Consider the point $u\in S_{y,z}$ so that $\ell:=|u-y|$. Defining $A^\ast=\{x\in \real^n: |y|\leq |x|\leq |u|\}$, 
we are done if we show that we have
\begin{eqnarray}\label{desigualdades}
\frac{|S_{y,z}\cap A|_1}{L} &=& \frac{|S_{y,z}\cap A^\star|_1}{L}\ \leq\ \left(\frac{k}{|z|^k-|y|^k}\int_{|y|}^{|z|}\chi_{A_0^\star}(s) s^{k-1}\,ds\right)^{1/k}\nonumber\\&\leq&  \left(\frac{k}{|z|^k-|y|^k}\int_{|y|}^{|z|}\chi_{A_0}(s)s^{k-1}\,ds\right)^{1/k}.
\end{eqnarray}
The equality in (\ref{desigualdades}) is a trivial consequences of the definition of $A^\star$. Now we prove the first inequality in (\ref{desigualdades}) in the case $k=2$. Denoting by $\gamma$ the angle determined by $S_{0,y}$ and $S_{y,z}$ at $y$, the inequality can be rewritten as
\[
\frac\ell L \leq \left(\frac{|u|^2-|y|^2}{|z|^2-|y|^2}\right)^{1/2}= \left(\frac{\ell^2-2\ell|y|\cos\gamma}{L^2-2L|y|\cos\gamma}\right)^{1/2},
\]
where the last expression comes from the cosine law. 
This is equivalent to
\[
\frac \ell L\leq \frac{\ell-2|y|\cos\gamma}{L-2|y|\cos\gamma},
\]
which is obviously true since $\ell\leq L$ and $\cos\gamma\leq0$. For $k\geq 2$ it is enough to show then that
\[
\left(\frac{|z|^k-|y|^k}{|u|^k-|y|^k}\right)^{1/k} \geq \left(\frac{|z|^2-|y|^2}{|u|^2-|y|^2}\right)^{1/2}.
\]
Dividing by $|y|$ and renaming $\alpha=(|z|/|y|)^2$ and $\beta=(|u|/|y|)^2$ the previous inequality becomes
\[
\left(\frac{\alpha^{k/2}-1}{\beta^{k/2}-1}\right)^{1/k} \geq \left(\frac{\alpha-1}{\beta-1}\right)^{1/2},
\]
or equivalently
\[
(\alpha-1)^{k/2}(\alpha^{k/2}-1)\geq(\beta-1)^{k/2}(\beta^{k/2}-1),
\]
which is true for $\alpha>\beta\geq1$ since $s\mapsto (s-1)^{k/2}(s^{k/2}-1)$ is clearly an increasing function for $s\geq1$.

\bigskip

As for the second inequality in (\ref{desigualdades}), let us define $T=\{|v-y|: v\in A\cap S_{y,z}\}$. Note that $|T|=\ell$ and that $T=\{s\geq 0: s-2|y|\cos\gamma \in A\}$. Therefore, by the change of variables $t=(s^2+|y|^2-2s|y|\cos\gamma)^{1/2}$ one has
\begin{eqnarray*}
\int_{A_0} t^{k-1}\,dt &=& \int_T (s^2+|y|^2-2s|y|\cos\gamma)^{(k-2)/2}(s-2|y|\cos \gamma)\,ds \\&\geq& \int_0^\ell (s^2+|y|^2-2s|y|\cos\gamma)^{(k-2)/2}(s-2|y|\cos \gamma)\,ds\\&=&\int_{|y|}^{|u|}t^{k-1}\,dt = \int_{A_0^\ast} t^{k-1}\,dt.
\end{eqnarray*}
To get the above inequality we have used that the integrated function is increasing with $s$.
\end{proof}

\bigskip

\begin{xrem}
The constant $2$ in this Lemma is optimal. To see this assume that $C>0$ is a constant such that for all $z\in\real^n$ and all radial set $A\subset\real^n$ one has
\begin{equation}\label{constante.anillos}
\mathcal K \chi_A(z) \leq C \tilde M_{v_2}\chi_{A_0}(|z|)^{1/2},
\end{equation}

\bigskip

Consider the segment $S_{w,z}$ and define $y$ as the point in $S_{w,z}$ that is closest to the origin. Assume that $|w|>|y|$ take $A=\{x\in\real^n: |y|\leq|x|\leq |w|\}$. By orthogonality
\[
\tilde M_{v_2} \chi_{A_0}(|z|) = \sup_{|y|\leq t\leq |w|} \frac{|w|^2-t^2}{|z|^2-t^2} = \frac{|w|^2-|y|^2}{|z|^2-|y|^2} = \frac{|w-y|^2}{|z-y|^2} = \frac{(\ell/2)^2}{(L-\ell/2)^2}
\]

\bigskip

Calling as before $L=|z-w|$ and $\ell=2|w-y|$ we have $\mathcal K\chi_A(z)\geq \ell/L$. Then inequality (\ref{constante.anillos}) implies
\[
C\geq 2\ \frac{L-\ell/2}L.
\]
Since, choosing $w$ appropriately, $\ell$ can be taken as small as wanted, necessarily we must have $C\geq2$.

\end{xrem}

\bigskip

\begin{center}
\begin{tikzpicture}
\def\a{1.5}
\def\r{4}
\def\myangle{-45}
\pgfmathsetmacro{\radio}{0.7*\a+0.3*\r}%

\coordinate (O) at (0,0);
\coordinate (Y) at (\myangle:\a);

\coordinate (AUX1) at (\myangle-90:4);
\coordinate (AUX2) at (\myangle+90:4);
\coordinate (K1) at ($(Y)+(AUX1)$);
\coordinate (K2) at ($(Y)+(AUX2)$);
\path[name path=L] (K1)--(K2);
\path[name path=circulo] (O) circle (\r);
\path[name intersections={of=circulo and L}];
\coordinate (Z) at (intersection-1);
\draw[name path=circulo,color=white,fill=black!20] (O) circle (\radio);
\path[name intersections={of=circulo and L}];
\coordinate (W) at (intersection-1);
\coordinate (U) at (intersection-2);

\draw[fill=white] (O) circle (\a);

\draw (Z) node [right] {$z$} --(W) node [below,xshift=5pt] {$w$};

\draw [thick] (U)--(W);

\fill (O) node [left] {$0$} circle (0.5mm) (Y) node [left,xshift=-1mm] {$y$} circle (0.5mm) (Z) circle (0.5mm) (W) circle (0.5mm) (U) node [below] {$u$} circle (0.5mm);

\draw (O) circle (\r);

\draw (\r+1,0.5) node [right] {$|z-w|=L$};
\draw (\r+1,-0.5) node [right] {$|u-w|=\ell$};

\coordinate (BA) at (160:\a+0.05);
\coordinate (BB) at (160:\r-0.05);

%\draw [<->] (BA)--(BB) node [midway,above=0cm] {$A_{|y|,|z|}$};

\coordinate (CA) at (180:\a+0.05);
\coordinate (CB) at (180:\radio-0.05);

\draw [<->] (CA)--(CB) node [midway,below=0cm] {$A$};

\draw [dotted,thick] (O)--(Y) (O)--(U) (O)--(Z);

\def\distancia{0.3}

\coordinate (DA) at ($(Y)+(\myangle+90:\distancia)$);
\coordinate (DB) at ($(DA)-(\myangle:\distancia)$);
\coordinate (DC) at ($(Y)-(\myangle:\distancia)$);

\draw (DA)--(DB)--(DC);

\end{tikzpicture}
\end{center}

\bigskip

\paragraph{Acknowledgements.} Both authors were partially supported by DGU grant MTM2010-16518.

% ----------------------------------------------------------------

\end{document}